\documentclass[12pt, reqno]{amsart}
\textwidth=5.5in \textheight=8.5in

\usepackage{amsmath, amssymb, amsthm}

\usepackage{color}

\usepackage[colorlinks=true, urlcolor=blue, linkcolor=blue, citecolor=blue, pdfstartview=FitH]{hyperref}
\usepackage{esint}

\newtheorem{theorem}{Theorem}
\newtheorem{lemma}{Lemma}
\newtheorem{proposition}{Proposition}

\newtheorem{corollary}{Corollary}

\newtheorem{remark}{Remark}

\title{Effect of the average scalar curvature on Riemannian manifolds}
\begin{document}
\author{Kwok-Kun Kwong}
\address{
School of Mathematics and Applied Statistics\\
University of Wollongong\\
Northfields Ave\\
NSW 2522, Australia}
\email{kwongk@uow.edu.au}
\maketitle

\begin{abstract}
We investigate the effect of the average scalar curvature on the conjugate radius, average area of the geodesic spheres, average volume of the metric balls and the total volume of a closed Riemannian manifold $N$ (or more generally $N$ with finite volume whose negative Ricci curvature integral on $SN$ is finite). For example, we prove that if the average scalar curvature is larger than the lower bound of the normalized Ricci curvature, then we can improve the Bishop-Gromov estimate on the average volume of the metric balls of any size. We also prove the monotone decreasing property of a certain geometric integral when the average scalar curvature has a lower bound. This leads to a comparison theorem of the average total mean curvature of geodesic spheres of radius up to $\mathrm{inj}(N)$.
\end{abstract}
\section{Introduction}
The research in this paper is partly motivated by the following result by Green \cite{average} (obtained independently by Berger), which requires only a lower bound on the average of the scalar curvature $R$ on a closed manifold. For a proof in English, see \cite[Theorem 1.16]{lee2019geometric}.

\begin{theorem}[Green, Berger]\label{green}
Let $\left(N^{n}, g\right)$ be a closed Riemannian manifold whose average scalar curvature $R$ is at least $n(n-1)$. Then the conjugate radius $\mathrm{conj}(N)$ of $(N, g)$ is less than or equal to $\pi$. If it is equal to $\pi$, then $(N, g)$ has constant sectional curvature $1$.
\end{theorem}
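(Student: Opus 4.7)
The natural strategy is to test the index form on every geodesic of length $\pi$ with the standard sinusoidal variation, then average over the unit tangent bundle $SN$ and invoke the geodesic flow's invariance of Liouville measure.

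Concretely, I would argue by contradiction: suppose $\mathrm{conj}(N) > \pi$. Then for every $v \in SN$ the index form on the geodesic $\gamma_v|_{[0,\pi]}$ is positive definite on nontrivial variations vanishing at the endpoints. Pick a parallel orthonormal frame $E_1(t),\dots,E_{n-1}(t)$ along $\gamma_v$ perpendicular to $\gamma_v'(t)$ and set $V_i(t)=\sin(t)E_i(t)$. Summing the index forms gives
\[
0 < \sum_{i=1}^{n-1} I(V_i,V_i) = \int_0^\pi\left[(n-1)\cos^2 t - \sin^2 t\,\mathrm{Ric}(\gamma_v'(t),\gamma_v'(t))\right]dt.
\]
The key point is that the right-hand side is a function on $SN$ whose integral I can compute exactly.

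Next I would integrate over $SN$ against the Liouville measure $d\mu$ and swap the order of integration. Because the geodesic flow $\phi_t$ preserves $d\mu$, the inner integral $\int_{SN}\mathrm{Ric}(\gamma_v'(t),\gamma_v'(t))\,d\mu(v)$ is independent of $t$, equal to $\int_{SN}\mathrm{Ric}(v,v)\,d\mu(v)$. Using the standard identity $\int_{S_pN}\mathrm{Ric}(v,v)\,d\sigma_p(v) = \frac{\mathrm{vol}(S^{n-1})}{n}R(p)$ and Fubini, this equals $\frac{\bar R}{n}\,\mathrm{vol}(SN)$, where $\bar R$ denotes the average scalar curvature. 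Since $\int_0^\pi\cos^2 t\,dt = \int_0^\pi\sin^2 t\,dt = \pi/2$, averaging the displayed inequality yields
\[
0 < \frac{\pi}{2}\,\mathrm{vol}(SN)\left[(n-1)-\frac{\bar R}{n}\right],
\]
which contradicts $\bar R \geq n(n-1)$. This proves $\mathrm{conj}(N)\leq \pi$.

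For the rigidity statement, assume $\mathrm{conj}(N)=\pi$ together with $\bar R \geq n(n-1)$. The index form argument now gives only $I(V_i,V_i)\geq 0$, and the averaged inequality becomes an equality forcing $\bar R = n(n-1)$ and $\sum_i I(V_i,V_i)=0$ for almost every $v$; continuity upgrades this to every $v$. Equality in the index form for a variation vanishing at the endpoints of a segment with no interior conjugate points forces each $V_i$ to be a Jacobi field, so $V_i''+R(V_i,\gamma_v')\gamma_v' = 0$ reduces to $R(E_i,\gamma_v')\gamma_v' = E_i$, i.e.\ every sectional curvature containing $\gamma_v'$ is $1$. Since $v \in SN$ is arbitrary, $(N,g)$ has constant sectional curvature $1$.

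The main technical point to be careful about is the measure-theoretic step: extracting a pointwise equality from the integrated one, and checking that the $V_i$'s being Jacobi fields in the limit does give constant curvature rather than merely constant Ricci. The rest is standard index-form and Liouville-measure machinery; the real elegance is that the invariance of $d\mu$ under the geodesic flow is exactly what lets a purely integrated scalar-curvature hypothesis control pointwise conjugate-point behavior.
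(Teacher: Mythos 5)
Your proposal is correct and follows essentially the same route as the paper's argument (given there for the generalized Theorem \ref{thm gen green}): test the index form with $\sin(\pi t/l)\,E_i$ along each unit-speed geodesic, sum over the parallel frame, integrate over $SN$, and use the geodesic flow's invariance of the Liouville measure together with the fiberwise identity $\fint_{S_pN}\mathrm{Ric}(v,v)\,d\sigma_p(v)=R(p)/n$ to convert the Ricci integral into the average scalar curvature; the rigidity case via the null space of the index form being Jacobi fields also matches. The only cosmetic difference is that you argue by contradiction at $l=\pi$ while the paper bounds an arbitrary $l\le\mathrm{conj}(N)$ directly.
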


Recall that the conjugate radius $\mathrm{conj}(N)$ is the supremum of all $r$ such that for any unit speed geodesic, any two conjugate points along it are at least $r$ units apart. Equivalently, it is the supremum of all $r$ such that the exponential map at every $p \in N$ has nonsingular derivative at every point of the ball $B_0(r) \subset T_{p}N$. In this paper, any Riemannian manifold $(N, g)$ is assumed to be complete, smooth and oriented, and $g$ is smooth as well.

The proof of Theorem \ref{green} is similar to that of the Bonnet-Myers theorem, by making use of the second variation formula for arclength. However, the Bonnet-Myers theorem assumes a lower bound on the Ricci curvature instead of the scalar curvature. In order to obtain some information from the average of the scalar curvature, the new ingredient is to integrate a certain geometric inequality (obtained from the second variation formula) along all $p$ in $N$ and all ``directions'' $\theta\in S_pN$, i.e. along the unit sphere bundle $SN$. The key point is that by Liouville's theorem, the geodesic flow preserves the canonical measure of $SN$, and this will ensure that the integral of the Ricci curvature $\int_{(p, \theta)\in SN} \mathrm{Ric}\left(\gamma_{p, \theta}^{\prime}(t)\right) d \mu_{SN}$ is actually independent of $t$, and becomes just the integral of the scalar curvature up to a multiplicative constant. This gives rise to the question of whether we can ``translate'' some results which require a lower bound on the Ricci curvature to just a lower bound on the average of the scalar curvature, or at least an improvement of those results if, in addition, the average of the scalar curvature is known.

The second motivation comes from (i) the Bishop-Gromov volume comparison theorem, and also (ii) the Taylor expansion of the volume of small geodesic balls about $r=0$, which involves the scalar curvature. The well-known Bishop Gromov volume comparison theorem says that if the (normalized) Ricci curvature is bounded below by a constant, then the volume of the geodesic ball is at most that of the volume of the ball with the same radius in the space form whose curvature is that constant. Counterexamples show that this assumption cannot be weakened to a lower bound on the scalar curvature. On the other hand, the Taylor expansion of the volume of small geodesic balls is
(cf. \cite[Theorem 3.1]{gray1974volume})
\begin{equation}\label{small r vol}
|B_p(r)|=\frac{\omega_{n-1}}{n} r^{n}\left(1-\frac{R(p)}{6(n+2)} r^{2}+O\left(r^4\right)\right),
\end{equation}
where $\omega_{n-1} $ is the area of the unit sphere in $\mathbb{R}^{n}$. This shows that a lower bound on the scalar curvature does have an effect on the volume of those balls when the radius is small. It is a difficult problem to estimate the global effect of the scalar curvature on the volume of geodesic balls when the radius is not small anymore, and an even harder problem to estimate its effect on the total ``size'' of the ambient manifold.

In this paper, we first generalize Green's theorem (Theorem \ref{green}) to the case where we only assume $N$ has finite volume and its negative Ricci curvature has finite integral:
\begin{theorem}[Theorem \ref{thm gen green}]\label{thm intro 0}
Suppose $(N^n, g)$ is complete with finite volume.
Assume the following:
\begin{enumerate}
\item
The integral $\int_{SN} \mathrm{Ric}^-(p, \theta) d\mu_{SN}$ is finite, where $\mathrm{Ric}^-_p(\theta)=\max\{-\mathrm{Ric}(p, \theta), 0\}$ denotes the negative part of the Ricci curvature.
\item
The average scalar curvature $\overline R$ is at least $n(n-1)$.
\end{enumerate}
Then $N$ has conjugate radius at most $ \pi $. The conjugate radius equals $ \pi $ if and only if $N$ has constant sectional curvature $1$.
\end{theorem}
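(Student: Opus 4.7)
\emph{Plan.} The strategy is to extend the Green--Berger argument, whose pointwise ingredients are the second variation of arclength and Liouville's theorem stating that the geodesic flow $\phi_t$ preserves the Liouville measure $d\mu_{SN}$. The only genuinely new feature relative to the closed case is that for non-compact $N$ one cannot swap orders of integration without justification, and hypothesis (1) is tailor-made for this.

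I would first fix $L$ with $0 < L < \mathrm{conj}(N)$, pick $(p,\theta) \in SN$, and complete $\theta$ to an orthonormal parallel frame $\gamma', E_1, \ldots, E_{n-1}$ along $\gamma_{p,\theta}|_{[0,L]}$. Testing the index form on the variations $V_i(t) = \sin(\pi t/L) E_i(t)$ and summing over $i$ yields the classical pointwise inequality
$$ \int_0^L \sin^2(\pi t/L)\, \mathrm{Ric}(\gamma'_{p,\theta}(t))\, dt \;\leq\; \frac{(n-1)\pi^2}{2L}. $$
Integrating over $(p,\theta) \in SN$, the right-hand side becomes $\frac{(n-1)\pi^2}{2L}\omega_{n-1}|N|$, which is finite. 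On the left, Liouville's invariance of $d\mu_{SN}$ under $\phi_t$ together with the fiberwise identity $\int_{S_pN} \mathrm{Ric}_p(\theta)\, d\sigma(\theta) = \frac{\omega_{n-1}}{n} R(p)$ shows that $\int_{SN} \mathrm{Ric}(\phi_t(p,\theta))\, d\mu_{SN}$ is independent of $t$ and equals $\frac{\omega_{n-1}}{n}\overline R\, |N|$. To swap the $t$-integral with the $\mu_{SN}$-integral I would invoke hypothesis (1): by Tonelli and Liouville, the nonnegative integrand $\sin^2(\pi t/L)\,\mathrm{Ric}^-(\phi_t(p,\theta))$ has total integral $\tfrac{L}{2}\int_{SN}\mathrm{Ric}^-\, d\mu_{SN} < \infty$, which justifies Fubini for the signed integrand. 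The resulting inequality
$$ \frac{L}{2} \cdot \frac{\omega_{n-1}}{n}\overline R\, |N| \;\leq\; \frac{(n-1)\pi^2}{2L}\,\omega_{n-1}|N| $$
simplifies to $\overline R \leq n(n-1)\pi^2/L^2$; letting $L \uparrow \mathrm{conj}(N)$ and using $\overline R \geq n(n-1)$ forces $\mathrm{conj}(N) \leq \pi$.

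For the rigidity claim, if $\mathrm{conj}(N) = \pi$ the same argument can be run at $L = \pi$, since the index form remains positive semi-definite on variations vanishing at the endpoints; this yields $\overline R = n(n-1)$. Equality in the integrated inequality then forces pointwise equality $\mu_{SN}$-a.e., and because $\sum_i I(V_i, V_i) = 0$ with each summand nonnegative, each $V_i = \sin(t) E_i$ must be a Jacobi field along $\gamma_{p,\theta}$. The Jacobi equation reduces to $R(E_i,\gamma')\gamma' = E_i$ for $i = 1, \ldots, n-1$, so every $2$-plane containing $\gamma'_{p,\theta}(t)$ has sectional curvature $1$; continuity of the curvature tensor promotes this to sectional curvature $\equiv 1$ on all of $N$. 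The converse direction is immediate from the explicit Jacobi fields on $S^n$. The main obstacle throughout is really the Fubini step, since without hypothesis (1) the integrand can fail to be absolutely integrable even in finite volume; a secondary concern is making the rigidity argument water-tight at $L = \pi$, where some geodesics may already have conjugate points.
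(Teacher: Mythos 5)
Your proposal is correct and follows essentially the same route as the paper: the second-variation/index-form inequality with test fields $\sin(\pi t/L)E_i$, integration over $SN$ justified by Tonelli applied to $\mathrm{Ric}^-$ together with Liouville invariance of $d\mu_{SN}$, the fiberwise identity converting the Ricci integral to the scalar curvature average, and the equality case via the index lemma forcing the test fields to be Jacobi fields. The only cosmetic difference is that you take $L<\mathrm{conj}(N)$ and pass to the limit, whereas the paper works directly with any $l\le\mathrm{conj}(N)$; both are valid since the index form stays positive semi-definite up to the conjugate radius.
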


We then establish some average area or volume estimates which involves not only the Ricci curvature, but also the average of the scalar curvature. We will see that if the normalized average scalar curvature on a closed manifold is greater than the (pointwise) lower bound of the Ricci curvature, then our result gives an improvement on the estimate of the average volume of geodesic balls and metric balls by using only the Bishop-Gromov volume comparison. Indeed, the result is quantitative and the average scalar curvature appears as an ``correction'' term to the estimate given by the Bishop-Gromov volume comparison theorem.

For example, we can prove the following result as a special case of Theorem \ref{thm2}.
\begin{theorem} [Theorem \ref{thm2}]\label{thm intro 2}
Let $\left(N^{n}, g\right)$ be a closed Riemannian manifold and $r>0$. Let $\overline R$ be the average of the scalar curvature on $N$.
Suppose $0 \le \mathrm{Ric} \le \kappa$ (where $\kappa>0$), then the average volume $\overline V(r)$ of the metric balls of radius $r$ in $(N, g)$ satisfies
$$\overline {V}(r) \le \frac{\omega_{n-1}}{n} r^{n}-\frac{\omega_{n-1}\overline R}{n \kappa} \int_{0}^{r}\left(1-e^{-\frac{\kappa t^{2}}{6}}\right)t^{n-1} dt.
$$
The equality holds if and only if $N$ is flat and $r\le \mathrm{inj}(N)$.
\end{theorem}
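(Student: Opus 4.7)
The plan is to reduce the assertion to an estimate on the Jacobian $\mathcal{A}_\theta(t)$ of $\exp_p$ at $t\theta$, integrated over the unit tangent bundle $SN$ equipped with its Liouville measure. Since $\mathcal{A}_\theta$ (extended by $0$ past the cut locus) controls $|B_p(r)|$ via polar coordinates, one has
\[
\overline{V}(r)\;\le\;\frac{1}{|N|}\int_0^r\!\!\int_{SN}\mathcal{A}_\theta(t)\,d\mu_{SN}\,dt,
\]
with equality precisely when $r\le\mathrm{inj}(N)$. The key will be a pointwise inequality $\mathcal{A}_\theta(t)\le t^{n-1}e^{-X(p,\theta,t)}$ in which $X$ is an integral along $\gamma_{p,\theta}$ of $\mathrm{Ric}(\gamma'(s))$; Liouville invariance of the geodesic flow then collapses $SN$-averages of $X$ into multiples of $\overline{R}$ via $\int_{S_pN}\mathrm{Ric}(\theta)\,d\theta=\tfrac{\omega_{n-1}}{n}R(p)$.

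For the pointwise bound, I set $\phi:=\mathcal{A}_\theta^{1/(n-1)}$; the matrix Riccati equation together with the Cauchy-Schwarz estimate $\mathrm{tr}(S^2)\ge (\mathrm{tr}\,S)^2/(n-1)$ gives $\phi''+\tfrac{1}{n-1}\mathrm{Ric}(\gamma'(t))\phi\le 0$ with $\phi(0)=0$, $\phi'(0)=1$. The substitution $u:=\log(\phi/t)$ (with $u(0^+)=0$ and $t^2u'(t)\to 0$ as $t\to 0$) converts this into $u''+\tfrac{2}{t}u'+(u')^2+\tfrac{\mathrm{Ric}(\gamma'(t))}{n-1}\le 0$, and dropping the non-negative $(u')^2$ term yields the linear inequality $(t^2u'(t))'\le-\tfrac{t^2}{n-1}\mathrm{Ric}(\gamma'(t))$. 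Integrating twice and applying Fubini gives
\[
u(t)\;\le\;-\frac{1}{(n-1)t}\int_0^t s(t-s)\,\mathrm{Ric}(\gamma_{p,\theta}'(s))\,ds,
\]
so that $\mathcal{A}_\theta(t)=t^{n-1}e^{(n-1)u(t)}\le t^{n-1}e^{-X}$ with $X(p,\theta,t):=\tfrac{1}{t}\int_0^t s(t-s)\,\mathrm{Ric}(\gamma'(s))\,ds$. The two-sided Ricci bound $0\le\mathrm{Ric}\le\kappa$ confines $X$ to $[0,\kappa t^2/6]$.

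For the integrated step I exploit the convexity of $x\mapsto e^{-x}$: on $[0,\kappa t^2/6]$ the graph lies below its secant, so $e^{-X}\le 1-\tfrac{1-e^{-\kappa t^2/6}}{\kappa t^2/6}X$. By Liouville, Fubini, and the scalar-curvature identity,
\[
\int_{SN}X\,d\mu_{SN}\;=\;\frac{1}{t}\int_0^t s(t-s)\,ds\cdot\frac{\omega_{n-1}|N|\overline{R}}{n}\;=\;\frac{\omega_{n-1}|N|\overline{R}\,t^2}{6n},
\]
and combining this with the secant bound yields
\[
\int_{SN}\mathcal{A}_\theta(t)\,d\mu_{SN}\;\le\;|N|\omega_{n-1}\,t^{n-1}\left(1-\frac{\overline{R}\,(1-e^{-\kappa t^2/6})}{n\kappa}\right).
\]
Integrating in $t$ from $0$ to $r$ and dividing by $|N|$ produces exactly the stated inequality.

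For the rigidity statement, equality throughout forces (i) $r\le\mathrm{inj}(N)$, (ii) equality in the Cauchy-Schwarz step and in dropping $(u')^2\ge 0$, which together give $\phi\equiv t$ along every radial geodesic and hence the vanishing of the radial curvature operator, and (iii) equality in the secant bound, forcing $X\in\{0,\kappa t^2/6\}$ a.e., which in view of (ii) collapses to $\mathrm{Ric}\equiv 0$. The vanishing of the radial curvature operator along every geodesic is equivalent to $N$ being flat, and the converse is immediate. The main obstacle is the pointwise step: the symmetric weight $s(t-s)/t$ inside $X$ is engineered so that Liouville produces a clean multiple of $\overline{R}\,t^2$ after integration, and the secant trick is what turns the Ricci ceiling $\kappa$ into the factor $(1-e^{-\kappa t^2/6})/\kappa$ appearing in the theorem.
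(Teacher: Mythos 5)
Your proposal is correct, and its global architecture coincides with the paper's: a pointwise Jacobian bound $F(p,t,\theta)\le t^{n-1}e^{-X}$ with $X=\frac1t\int_0^t s(t-s)\,\mathrm{Ric}(\gamma'(s))\,ds$, the secant (reverse Jensen) bound for $e^{-x}$ on $[0,\kappa t^2/6]$ (the paper's Lemma \ref{lem1} with $c_1=0$, $c_2=\kappa t^2/6$), Liouville invariance to convert $\int_{SN}X$ into $\frac{\omega_{n-1}|N|\overline R\,t^2}{6n}$, and a cut-off of the Jacobian beyond the cut/conjugate locus to pass from geodesic balls to metric balls (the paper's $\widetilde F$). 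The one genuine divergence is how the pointwise bound is derived. The paper (Proposition \ref{prop1}, inequality \eqref{F'}) gets $\partial_r\log F\le \frac{n-1}{r}-\frac{1}{r^2}\int_0^r t^2\,\mathrm{Ric}(\gamma'(t))\,dt$ from the index form with the test fields $\frac{s_k(t)}{s_k(r)}E_i$ and the index lemma; you get the identical differential inequality from the Bishop--Riccati comparison $\phi''+\frac{\mathrm{Ric}}{n-1}\phi\le0$ via $u=\log(\phi/t)$ and discarding $(u')^2\ge0$, and your single-integral weight $s(t-s)/t$ is exactly the paper's double integral $\int_0^r\int_0^\tau(t/\tau)^2\,\mathrm{Ric}\,dt\,d\tau$ after Fubini. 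What each buys: the index-form route generalizes verbatim to arbitrary $k$ (replace $t$ by $s_k(t)$) and makes the rigidity immediate, since equality forces the comparison fields to be Jacobi fields and hence the polar metric to be $dt^2+s_k(t)^2g_{\mathbb S^{n-1}}$; your Riccati route is more elementary and self-contained, at the cost of a slightly more delicate equality analysis, which you handle correctly by noting that equality in the Jacobian step forces $\phi\equiv t$ and vanishing radial curvature, which then rules out the $X=\kappa t^2/6$ branch of the secant equality. Two small imprecisions worth noting but not affecting validity: with $\mathcal A_\theta$ extended by zero past the cut locus your first display is in fact an equality for every $r$ (the inequality with equality iff $r\le\mathrm{inj}(N)$ arises only if one cuts off at the first conjugate point instead), and the forcing of $r\le\mathrm{inj}(N)$ in the rigidity argument really comes from the later step, where $\mathcal A_\theta(t)=0$ past the cut locus cannot match the strictly positive upper bound $t^{n-1}e^{-X}$.
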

As opposed to \eqref{small r vol}, this inequality is true even beyond the injectivity radius, not only for small $r$. The corresponding comparison against other space forms of curvature $k$ holds as well. Indeed, the $k=1$ case of this result gives an improved upper bound of the total volume if we have a positive lower bound of the Ricci curvature:
\begin{theorem}[Theorem \ref{thm total vol}]\label{thm intro 3}
Let $\kappa>0$ and $\left(N^{n}, g\right)$ be a closed Riemannian manifold with $0 \le \mathrm{Ric}-(n-1)g \le \kappa$, then
$$
|N| \le \left|\mathbb{S}^{n}\right|-\frac{\omega_{n-1}\overline {R_{1}}}{n \kappa} \int_{0}^{\pi}\left(1-e^{-\kappa \sigma_{1}(t)}\right)\sin^{n-1}(t) d t.
$$
Here $\overline {R_1}=\overline R-n(n-1)$ and $\sigma_1(t)$ is the positive function $\frac{1}{2 }(1- t \cot (t))$ on $(0, \pi)$.

The equality holds if and only if $N$ is isometric to $\mathbb S^{n}$.
\end{theorem}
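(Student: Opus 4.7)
The plan is to derive Theorem \ref{thm intro 3} as the endpoint $r=\pi$ of the general volume comparison Theorem \ref{thm2}, with the sphere of curvature $1$ as the comparison model. The hypothesis $\mathrm{Ric}\ge(n-1)g$ is what makes this endpoint globally meaningful: by Bonnet--Myers, $\mathrm{diam}(N)\le\pi$, so $B_p(\pi)=N$ for every $p\in N$, and hence $\overline V(\pi)=|N|$.

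First, I would invoke (or formulate) the $k=1$ analogue of Theorem \ref{thm2}: under $(n-1)g\le\mathrm{Ric}\le(n-1)g+\kappa g$, the average volume of metric balls of radius $r$ should satisfy
\[
\overline V(r)\le V_1(r)-\frac{\omega_{n-1}\overline{R_1}}{n\kappa}\int_0^r\left(1-e^{-\kappa\sigma_1(t)}\right)\sin^{n-1}(t)\,dt,
\]
where $V_1(r)=\int_0^r \omega_{n-1}\sin^{n-1}(t)\,dt$ is the volume of the $r$-ball in $\mathbb S^n$ and $\overline{R_1}=\overline R-n(n-1)\ge 0$ is the shifted average scalar curvature (nonnegative by $\mathrm{Ric}\ge(n-1)g$). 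The function $\sigma_1(t)=\tfrac12(1-t\cot t)$ should arise naturally from integrating the Riccati comparison against the sphere; it is the integral transform of the sphere's mean-curvature profile, playing the role that the corresponding Euclidean integral $t^2/6$ played in Theorem \ref{thm intro 2}.

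Second, I substitute $r=\pi$. The identity $V_1(\pi)=|\mathbb S^n|$, combined with $\overline V(\pi)=|N|$ from Bonnet--Myers, delivers the claimed inequality immediately. For the rigidity, equality in the conclusion forces equality in the $k=1$ version of Theorem \ref{thm2} at $r=\pi$, whose equality case (the $k=1$ analogue of ``$(N,g)$ is flat and $r\le\mathrm{inj}(N)$'') should read ``$(N,g)$ has constant sectional curvature $1$ and $r\le\mathrm{inj}(N)$''. At $r=\pi$ this forces $\mathrm{inj}(N)\ge\pi$, and the only closed oriented space form of constant curvature $1$ with injectivity radius at least $\pi$ is $\mathbb S^n$ itself; the converse direction is an immediate computation.

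The main obstacle is not the present deduction, which is short, but the sphere-model version of Theorem \ref{thm2} underlying it. That proof requires redoing the radial Jacobi/Riccati analysis with $\mathbb S^n$ as model and inserting the average scalar curvature through the same integrated-Ricci-on-$SN$ device that drives Theorem \ref{green}. Keeping track of the correct limiting behavior at $t=\pi$, where $\sin^{n-1}t$ vanishes while $\sigma_1(t)$ blows up, is the technical point I would be most careful about; it is this balance that lets the correction integral converge and that pins down the rigid case.
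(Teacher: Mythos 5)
Your proposal is correct and follows essentially the same route as the paper: Bonnet--Myers gives $\mathrm{diam}(N)\le\pi$, so the metric ball of radius $\pi$ is all of $N$, and Theorem \ref{thm2} with $k=1$ and the constants $c_1(t)=0$, $c_2(t)=\kappa\sigma_1(t)$ (i.e.\ $b(t)=1$, $a(t)=\frac{1-e^{-\kappa\sigma_1(t)}}{\kappa\sigma_1(t)}$, as in Corollary \ref{cor1}) yields exactly the stated bound at $r=\pi$. The only minor divergence is in the rigidity: the paper forces $\mathrm{diam}(N)=\pi$ and invokes Cheng's maximal diameter theorem, whereas you use the equality case of Theorem \ref{thm2} (constant curvature $1$ together with $\mathrm{inj}(N)\ge\pi$); both arguments are valid.
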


We also have a result for the average over the unit sphere bundle $SN$ of the mean curvature of geodesic spheres in $N$. Let $H(p, r, \theta)$ be the mean curvature at the point $\exp_{p}(r \theta)$ of the geodesic sphere of radius $r$ centered at $p$.
We can prove the following result:
\begin{theorem}[Theorem \ref{thm H}]\label{thm intro 4}
Let $\left(N^{n}, g\right)$ be a closed Riemannian manifold with average scalar curvature $\overline R$. Then the average over $S N$ of the mean curvature of the geodesic spheres of radius $r$ in $N$ satisfies
$$
\fint_{S N} H(p, r, \theta) d \mu_{S N}(p, \theta) \le \frac{n-1}{r}-\frac{\overline R}{3n}r
$$
for any $r>0$ which is smaller than the injectivity radius of $N$. If the equality holds for some $r$, then $(N, g)$ is flat.
\end{theorem}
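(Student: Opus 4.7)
The plan is to derive a scalar ODI for the averaged mean curvature $\overline H(r) := \fint_{SN} H(p, r, \theta) \, d\mu_{SN}(p, \theta)$ and compare it with an explicit Riccati model. The starting point is the standard Riccati equation $\partial_r H = -|\mathrm{II}|^2 - \mathrm{Ric}(\gamma')$ along each radial geodesic $\gamma_{p,\theta}$, where $\mathrm{II}$ denotes the second fundamental form of the geodesic sphere. Cauchy--Schwarz on the principal curvatures gives $|\mathrm{II}|^2 \ge H^2/(n-1)$, hence the pointwise differential inequality
$$\partial_r H(p, r, \theta) \le -\frac{H(p, r, \theta)^2}{n-1} - \mathrm{Ric}(\gamma'_{p,\theta}(r)).$$

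I would then integrate against the normalized Liouville measure on $SN$. The measure-invariance of the geodesic flow (exploited in the same way as in the proof of Theorem~\ref{green}), combined with the fibrewise identity $\int_{S_pN} \mathrm{Ric}(\theta) \, d\theta = \omega_{n-1} R(p)/n$, makes the averaged Ricci term independent of $r$ and equal to $\overline R/n$. Jensen's inequality $\fint_{SN} H^2 \, d\mu_{SN} \ge \overline H(r)^2$ then converts the pointwise estimate into the scalar ODI
$$\overline H'(r) \le -\frac{\overline H(r)^2}{n-1} - \frac{\overline R}{n}, \qquad \overline H(r) \sim \frac{n-1}{r} \text{ as } r \to 0^+.$$

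Setting $\beta = \overline R/(n(n-1))$ and substituting $\overline H = (n-1) u'/u$, the ODI becomes $u'' + \beta u \le 0$. Sturm comparison with the model solution of $v'' + \beta v = 0$ having the same asymptotic at $0^+$ yields $\overline H(r) \le \psi(r)$, where $\psi$ equals $(n-1)\sqrt{\beta}\,\cot(\sqrt{\beta}\, r)$, $(n-1)/r$, or $(n-1)\sqrt{-\beta}\,\coth(\sqrt{-\beta}\, r)$ according as $\beta > 0$, $\beta = 0$, or $\beta < 0$. The bound $\psi(r) \le (n-1)/r - \overline R\, r /(3n)$ then reduces to the elementary estimates $\cot x \le 1/x - x/3$ on $(0, \pi)$ and $\coth x \le 1/x + x/3$ on $(0, \infty)$, both immediate from the partial fraction expansions of $\cot$ and $\coth$.

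The main technical subtlety is ensuring that the comparison $\overline H \le \psi$ is valid throughout $(0, \mathrm{inj}(N))$ in the $\beta > 0$ regime, where $\psi$ blows down at $r = \pi/\sqrt{\beta}$. Theorem~\ref{thm intro 0} supplies the needed input: applied after a homothetic rescaling normalizing $\overline R$ to $n(n-1)$, it yields $\mathrm{inj}(N) \le \mathrm{conj}(N) \le \pi/\sqrt{\beta}$, so $\psi$ stays defined on the relevant range. For the rigidity statement, the elementary $\cot/\coth$ estimates are strict for $\beta \ne 0$, so equality at some $r$ forces $\overline R = 0$; equality in Jensen then forces $H(\cdot, r, \cdot) \equiv (n-1)/r$ on $SN$, equality in Cauchy--Schwarz forces $\mathrm{II}= (1/r)\, g|_\Sigma$ (totally umbilic geodesic spheres with the Euclidean mean curvature), and the pointwise matrix Riccati equation $\partial_r \mathrm{II} + \mathrm{II}^2 + R(\,\cdot\,, \gamma')\gamma' = 0$ then forces the Jacobi operator to vanish. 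Taking $r \to 0^+$ shows every sectional curvature at every $p \in N$ vanishes, so $(N, g)$ is flat.
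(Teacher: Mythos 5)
Your proof is correct, but it follows a genuinely different route from the paper's. The paper obtains Theorem \ref{thm H} from Proposition \ref{prop2}: the index lemma with the explicit test fields $\frac{s_k(t)}{s_k(r)}E_i$ gives the \emph{pointwise} bound
\[
H(p,r,\theta)\ \le\ \frac{F_k'(r)}{F_k(r)}-\int_0^r\frac{s_k(t)^2}{s_k(r)^2}\,\mathrm{Ric}_k\bigl(\gamma_{p,\theta}'(t)\bigr)\,dt,
\]
whose right-hand side is \emph{affine} in the Ricci curvature; averaging over $SN$ and invoking Liouville invariance then converts the Ricci integral directly into $\frac{\overline{R_k}}{n}\phi_k(r)$ with $\phi_0(r)=r/3$, so no convexity step is needed and the whole one-parameter family of inequalities (arbitrary $k$) drops out at once. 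You instead average the nonlinear Riccati equation, close the ODI with Cauchy--Schwarz and Jensen, and run a Sturm comparison. What your route buys: it is self-contained at the ODE level (no index lemma), and your intermediate bound $\overline{H}(r)\le (n-1)\,s_\beta'(r)/s_\beta(r)$ with $\beta=\overline R/(n(n-1))$ coincides with the $k=\beta$ member of the paper's family (the one with $\overline{R_k}=0$), which for $\overline R\neq 0$ is strictly stronger than the stated $k=0$ inequality by the strict estimates $\cot x<\tfrac1x-\tfrac x3$ and $\coth x<\tfrac1x+\tfrac x3$. What it costs: you must import Theorem \ref{thm intro 0} to keep the model solution positive on $(0,\mathrm{inj}(N))$ when $\overline R>0$, whereas the paper's $k=0$ argument needs no such input; and in the rigidity discussion you should state explicitly the Wronskian step --- $w=u'v-uv'$ is nonincreasing with $w(0^+)=w(r_0)=0$, hence $w\equiv 0$ and equality holds in the averaged Riccati inequality on all of $(0,r_0)$ --- before equality in Jensen and in Cauchy--Schwarz can be invoked at every radius $t\in(0,r_0)$. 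With that made explicit, your conclusion (umbilicity plus the matrix Riccati equation force the Jacobi operators $R(\cdot,\gamma')\gamma'$ to vanish, hence $N$ is flat) is sound and matches the paper's rigidity statement.
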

Interestingly, this result is equivalent to the monotone property of a certain geometric integral. More precisely, it is not hard to see that this is equivalent to
$$
\frac{d}{d r}\left(\fint_{S N} \log \left(\frac{F(p, r, \theta)}{r^{n-1}}\right) d \mu_{S N}(p, \theta)\right) \le -\frac{\overline{R}}{3n}r,
$$
where $F(p, r, \theta)$ is the Jacobian in the geodesic polar coordinates centered at $p$.
Therefore $\fint_{S N} \log \left(\frac{F(p, r, \theta)}{r^{n-1}}\right) d \mu_{S N}(p, \theta)$ is monotone decreasing in $r$ if the average scalar curvature is non-negative. Whether the integral $\fint_{S N} \log \left(\frac{F(p, r, \theta)}{r^{n-1}}\right) d \mu_{S N}(p, \theta)$ (besides $\fint_{SN}\int_{0}^{r}\left(H(p, t, \theta)-\frac{n-1}{t}\right)dt d\mu_{S N}(p, \theta)$) has a simple geometric interpretation is unknown to us.

Note that unlike Theorem \ref{thm intro 2} and Theorem \ref{thm intro 3}, we do not impose any assumption on the curvature of the ambient space in Theorem \ref{thm intro 4}.
This result comes as a surprise to us due to the following reason. For small geodesic spheres, Gray and Vanhecke \cite[Theorem 12.3]{gray1979Riemannian} computed the expansion
\begin{align*}
&\fint_{\mathbb S^{n-1}} H(p, r, \theta)d\theta\\
=&\frac{n-1}{r}-\frac{R(p)}{3 n} r-\frac{1}{90 n(n+2)}\left(3\|\mathrm{Rm}|_p\|^{2}+2\|\mathrm{Ric}|_p\|^{2}+18 \Delta R(p)\right) r^{3}+O(r^5),
\end{align*}
where the $O(r^5)$ term is an explicit but complicated term (which spans three lines!), whose sign is not obvious at all. So while we should expect that for small $r$, the average of the total mean curvature should be not more than $\frac{n-1}{r}-\frac{\overline {R}}{3 n} r$, at least up to order $4$ terms, we shouldn't expect that by ``integrating'' this Taylor expansion, we can obtain an inequality for spheres of all size as the higher order terms may dominate.
The point is that given only a lower bound of the average of the scalar curvature this result gives an estimate of the mean curvature of geodesic spheres of any radius (up to the injectivity radius $\mathrm{inj}(N)$), but in the average sense.

There are several ingredients in the proofs of the main results. Central to the proofs of these results is the Liouville theorem: the canonical measure on the unit sphere bundle $SN$ is preserved under the geodesic flow. This will ensure that the integral of the Ricci curvature $\int_{(p, \theta) \in S N} \mathrm{Ric}\left(\gamma_{p, \theta}^{\prime}(t)\right) d \mu_{S N}$ is actually independent of $t$, and becomes just the average of the scalar curvature up to a multiplicative constant. To prove the area or volume comparison theorem, we also need a Laplacian or Jacobian comparison result which does not require any assumption on the curvature, see Proposition \ref{prop1}. This can be regarded as the infinitesimal version of the volume comparison result. In proving Theorem \ref{thm intro 2} and Theorem \ref{thm intro 3}, another ingredient is a ``reverse'' Jensen inequality (Lemma \ref{lem1}), which enables us to swap the integral sign with the exponential function when integrating the infinitesimal version of the inequalities.

The organization of this paper is as follows. In the next section, we are going to study the effect of the average scalar curvature on various geometric quantities on $N$. We first set up the notation in Subsection \ref{notation}. Then in Subsection \ref{conj} we illustrate the effect on the conjugate radius, which is generalization of the Green's theorem. In Subsection \ref{jac est}, we give the main Jacobian estimate that does not require any assumption on the curvature, which is essentially already contained in \cite{kwong2019quantitative}. In Subsection \ref{area}, we are going to study the effect of the average scalar curvature on the average area of geodesic sphere, average volume of the metric balls and the total volume. Finally we reinterpret Theorem \ref{thm monotone} as an effect of the scalar curvature on the average of the mean curvature of geodesic spheres in Subsection \ref{mean curvature}.\\

\noindent Acknowledgement: We would like to thank Man-Chun Lee for useful discussions and Luen-Fai Tam for his interest. The research of the author is partially supported by the CERL fellowship at University of Wollongong.

\section{Average scalar curvature and its effect on the geometry of $N$}
\subsection{Notation}\label{notation}

Let $(N, g)$ be an $ n$-dimensional Riemannian manifold. Let $ k\in \mathbb R$ and define the functions $s_k$ by
\begin{align*}\label{eq: sk}
s_k(t)=
\begin{cases}
\frac{1}{\sqrt{k}} \sin \left(\sqrt{k}t\right)\quad &\textrm{ if }k>0\\
t\quad &\textrm{ if }k=0\\
\frac{1}{\sqrt{-k}}\sinh \left(\sqrt{-k}t\right)\quad &\textrm{ if }k<0.
\end{cases}
\end{align*}
For $p\in N$, let $\gamma_{p, \theta}(t)$ be the geodesic starting from $ p$ with initial vector $\theta\in S_pN =\{\theta\in T_pN : |\theta|=1\}$. We define
\begin{equation*}
{\mathrm{Ric}}_k:= \mathrm{Ric}-(n-1)k g.
\end{equation*}
For simplicity we will write $ {\mathrm{Ric}}_k(v)$ instead of $ {\mathrm{Ric}}_k(v, v)$. We can regard $\mathrm{Ric}_k$ as a function on the unit sphere bundle $SN$.
Let $R$ be the scalar curvature of $(N, g)$. We also define $ {R}_k =\mathrm{tr}_g({\mathrm{Ric}_k})=R-n(n-1)k$ and $\overline { {R}_k }=\fint_N {R_k}$.

Let $p\in N$ and let $\{t, \theta^1, \cdots, \theta^{n-1}\}$ be the geodesic polar coordinates around the point $\exp_p(t\theta)$, where $\theta \in S_{p} N \cong \mathbb{S}^{n-1}$. We define the Jacobian $F(p, t, \theta):=\sqrt{\mathrm{det}_{n-1}\left(g_{i j}\right)}$, where $g_{i j}:=g\left(\left. d \exp_{p}\right|_{t \theta}\left(\frac{\partial}{\partial \theta^{i}}\right), \left. d \exp_{p}\right|_{t \theta}\left(\frac{\partial}{\partial \theta^{j}}\right)\right)$.
Within the cut locus of $p$, the volume element on $N$ can be expressed as $d \mu_N=F(p, r, \theta) d r d \theta$, where $d \theta$ is the volume element of $\mathbb{S}^{n-1}$. Let $$F_k(r)=s_{k}(r)^{n-1}, $$
which is the corresponding volume density of the space form $\left(\left[0, r_{0}\right) \times \mathbb{S}^{n-1}, d t^{2}+s_{k}(t)^{2} g_{\mathbb{S}^{n-1}}\right)$ in polar coordinates.

For $r$ less than the injectivity radius of $p\in N$, the geodesic ball $B_p(r)$ (resp. geodesic sphere $S_p(r)$) of radius $r$ centered at $p$ is by definition the image under the exponential map $\exp_p$ of the open ball (resp. sphere) in $T_pN$ of radius $r$ centered at $0$.

We denote the $(n-1)$-dimensional area $|\mathbb{S}^{n-1}|$ by $\omega_{n-1}$. The $(n-1)$-dimensional area of the geodesic sphere of radius $r$ centered at $p$ is denoted by $A(p, r)$.
We also use both $\exp(x)$ and $e^x$ interchangeably to denote the exponential function.
\subsection{Effect on conjugate radius}\label{conj}

We first illustrate the idea of Theorem \ref{green}. Indeed, we are going to prove a generalization of it.
The Bonnet-Myers theorem says that if $N$ has a positive Ricci curvature lower bound, then it is compact and its diameter is bounded. This gives rise to the following question: if the volume is finite and the negative part of the Ricci curvature is not too large in a certain sense, what we can say about the ``size'' of the manifold.
The following result gives a result of this kind, and can be regarded as a generalization of Green's theorem. This result is perhaps known to experts, but in any case it illustrates the role of the scalar curvature in the proofs of the remaining results.
\begin{theorem}\label{thm gen green}
Suppose $(N^n, g)$ is complete.
Assume the following:
\begin{enumerate}
\item
{{$(N, g)$ has finite volume and the integral $\int_{SN} \mathrm{Ric}^-(p, \theta) d\mu_{SN}(p, \theta)$ is finite, }}where $\mathrm{Ric}^-(p, \theta)=\max\{-\mathrm{Ric}(p, \theta), 0\}$ denotes the negative part of the Ricci curvature.
\item
The average of the scalar curvature is at least $n(n-1)k$, where {{$k\in(0, \infty]$}}.
\end{enumerate}
Then either {{$\mathrm{conj}(N)=0$}}, or {{$\fint_N Rd\mu_N$ (and hence $k$) is finite and $0<\mathrm{conj}(N)\le\frac{\pi}{\sqrt{k}}$. }}

The conjugate radius equals $\frac{\pi}{\sqrt{k}}$ (for finite $k$) if and only if $N$ has constant sectional curvature $k$.
\end{theorem}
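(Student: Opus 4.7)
\medskip
\noindent\textbf{Proof plan.}
The strategy is to imitate the second-variation argument behind the classical Theorem \ref{green}, with the integrability hypothesis $\int_{SN}\mathrm{Ric}^{-}\,d\mu_{SN}<\infty$ playing the role of compactness when integrating over $SN$. If $\mathrm{conj}(N)=0$ the first alternative holds and there is nothing further to prove, so I assume throughout that $\mathrm{conj}(N)>0$ and fix any $L$ with $0<L<\mathrm{conj}(N)$.

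For each $(p,\theta)\in SN$, the plan is to apply the second variation of arc length along $\gamma_{p,\theta}$ with the test field $V(t)=\sin(\pi t/L)\,E(t)$, where $E$ is a parallel unit field perpendicular to $\gamma_{p,\theta}'$. Because $L$ lies strictly below the first conjugate distance along $\gamma_{p,\theta}$, the index form is nonnegative on $V$, and summing over an orthonormal $(n-1)$-frame of such parallel fields produces the pointwise inequality
\begin{equation*}
\int_{0}^{L}\sin^{2}(\pi t/L)\,\mathrm{Ric}(\gamma_{p,\theta}'(t))\,dt \;\le\; \frac{(n-1)\pi^{2}}{2L}.
\end{equation*}

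I would then integrate this bound over $SN$. Tonelli applied to the nonnegative integrand $\sin^{2}(\pi t/L)\,\mathrm{Ric}^{-}(\gamma'(t))$, together with Liouville's theorem for the geodesic flow, gives
\begin{equation*}
\int_{SN}\!\int_{0}^{L}\sin^{2}(\pi t/L)\,\mathrm{Ric}^{-}(\gamma'(t))\,dt\,d\mu_{SN} \;=\; \frac{L}{2}\int_{SN}\mathrm{Ric}^{-}\,d\mu_{SN}\;<\;\infty,
\end{equation*}
and the pointwise inequality then forces the analogous integral of $\mathrm{Ric}^{+}$ to be finite as well, legitimising Fubini on $\mathrm{Ric}=\mathrm{Ric}^{+}-\mathrm{Ric}^{-}$. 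Applying Liouville a second time identifies the inner integral with the time-independent quantity $\int_{SN}\mathrm{Ric}\,d\mu_{SN}=(\omega_{n-1}/n)\int_{N}R\,d\mu_{N}$, whereupon the integrated inequality rearranges to $\overline{R}\le n(n-1)\pi^{2}/L^{2}$. In particular $\overline{R}$ is finite, so the case $k=\infty$ is incompatible with $\mathrm{conj}(N)>0$; for finite $k$, letting $L\nearrow\mathrm{conj}(N)$ and invoking $\overline{R}\ge n(n-1)k$ gives $\mathrm{conj}(N)\le\pi/\sqrt{k}$.

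For the rigidity case $\mathrm{conj}(N)=\pi/\sqrt{k}$, I would run the same argument at the critical value $L=\pi/\sqrt{k}$, using the comparison field $V(t)=\sin(\sqrt{k}\,t)E(t)$. Every geodesic has its first conjugate point at distance at least $\mathrm{conj}(N)=L$, so the index form on fields vanishing at $0$ and $L$ is still positive semi-definite, the pointwise inequality persists, and the integrated version becomes an equality. This forces pointwise equality for $\mu_{SN}$-a.e.\ $(p,\theta)$, which in turn forces each index form $I(V_{i},V_{i})$ to vanish; hence $V_{i}=\sin(\sqrt{k}\,t)E_{i}(t)$ must be a Jacobi field, giving $R(E_{i},\gamma_{p,\theta}')\gamma_{p,\theta}'=kE_{i}$ for a.e.\ $(p,\theta)$ and every perpendicular parallel direction. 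Continuity of the curvature tensor upgrades the a.e.\ statement to one holding on all of $N$, yielding constant sectional curvature $k$. The main technical difficulty I anticipate is precisely this last step: the $L<\mathrm{conj}(N)$ argument gives only a strict inequality, so pushing it to $L=\mathrm{conj}(N)$ demands care with the index form at the conjugate radius, and the passage from an a.e.\ Jacobi identity on $SN$ to a genuine pointwise curvature identity requires a continuity/density argument.
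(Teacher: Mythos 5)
Your proposal is correct and follows essentially the same route as the paper: the second variation/index form argument with the test fields $\sin(\pi t/L)E_i$, integration over $SN$ justified by Fubini--Tonelli via the finiteness of $\int_{SN}\mathrm{Ric}^-$, Liouville's theorem to identify $\int_{SN}\mathrm{Ric}(\gamma_{p,\theta}'(t))\,d\mu_{SN}$ with the $t$-independent scalar curvature integral, and the Jacobi field rigidity analysis at $L=\pi/\sqrt{k}$. Your extra care in upgrading the a.e.\ curvature identity to a pointwise one by continuity is a detail the paper leaves implicit, but it is not a different method.
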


\begin{proof}
If $\mathrm{conj}(N)=0$, then we have nothing to prove. Otherwise, let $l$ be any positive real number not bigger than $\mathrm{conj}(N)$.

Let $p \in N$ and $\theta\in S_{p} N$, and let $\gamma=\gamma_{p, \theta}:[0, l] \to N$ be the unique geodesic with $\gamma(0)=p$ and $\gamma^{\prime}(0)=\theta$. As $\gamma$ has no conjugate points before $l$, for any vector field $X$ along $\gamma$ vanishing at the endpoints, we have $I(X, X) \ge 0$ by the index lemma, where
$$I(X, X)=\int_{0}^{l}\left[\left|X^{\prime}(t)\right|^{2}-\mathrm{Rm}\left(\gamma^{\prime}(t), X(t), X(t), \gamma^{\prime}(t)\right)\right] d t$$
is the index form. Choose $V_{1}, \cdots, V_{n-1}$ so that $\gamma^{\prime}, V_{1}, \cdots, V_{n-1}$ forms a parallel orthonormal basis along $\gamma$ and set $X_{i}=\sin \left(\frac{\pi t}{l}\right) V_{i}$, then we have $I\left(X_{i}, X_{i}\right) \ge 0$ for each $i$ from 1 to $n-1$. Summing this inequality over $i$ gives
$$
(n-1) \frac{\pi^{2}}{2 l}-\int_{0}^{l} \mathrm{Ric}\left(\gamma_{p, \theta}^{\prime}(t)\right) \sin^{2}\left(\frac{\pi t}{l}\right) d t \ge 0.
$$
Now we integrate this inequality over $(p, \theta)\in SN$, then by {{Fubini-Tonelli theorem}},
\begin{equation}\label{fubini}
(n-1) \frac{\pi^{2}}{2 l} \omega_{n-1}|N|-\int_{0}^{l}\left(\int_{SN} \mathrm{Ric}\left(\gamma_{p, \theta}^{\prime}(t)\right) d \mu_{SN}(p, \theta)\right) \sin^{2}\left(\frac{\pi t}{l}\right) d t \ge 0.
\end{equation}
{{We make some remarks on \eqref{fubini}. Recall that the Fubini-Tonelli theorem states that if either the negative (or positive) part $f^-$ of the integrand $f$ satisfies $\int_0^l\int_{SN}f^- d\mu_{SN}dt<\infty$, then $\int_{SN}\int_{0}^{l}f dt d\mu_{SN}=\int_{0}^{l}\int_{SN}f d\mu_{SN}dt$. The negative part of the integrand in \eqref{fubini} is bounded by $ \mathrm{Ric}^-(\gamma_{p, \theta}'(t)) $. We are going to see that $\int_{SN} \mathrm{Ric}^-(\gamma_{p, \theta}'(t))d\mu_{SN} $ is finite. }}{{Indeed, by Liouville theorem}} \cite[p.117]{chavel1984eigenvalues}, $\int_{SN} \mathrm{Ric}^-\left(\gamma_{p, \theta}^{\prime}(t) \right) d \mu_{SN}$ is actually independent of $t$ since the geodesic flow is a diffeomorphism of $SN$ preserving $d \mu_{SN}$, and so it is equal to $\int_{SN} \mathrm{Ric}^-\left(\gamma_{p, \theta}^{\prime}(0) \right) d \mu_{SN}=\int_{S N} \mathrm{Ric}^-(p, \theta) d\mu_{SN}<\infty$. So we can interchange the integral sign and \eqref{fubini} is justified. Therefore by the {{coarea formula}} and the {{Liouville theorem again}},
\begin{equation*}\label{ineq conj}
\begin{split}
(n-1) \frac{\pi^{2}}{2 l} \omega_{n-1}|N|
& \ge \left(\int_{SN} \mathrm{Ric}(p, \theta) d \mu_{SN}\right)\left(\int_{0}^{l} \sin^{2}\left(\frac{\pi t}{l}\right) d t\right) \\
& = \left(\int_{N}\int_{S_pN} \mathrm{Ric}(p, \theta) d\theta d \mu_{N}\right)\frac{l}{2} \\
&=\left(\int_{N} \frac{\omega_{n-1}}{n} R \, d \mu_{N}\right) \frac{l}{2} \\
& \ge (n-1) \frac{lk}{2} \omega_{n-1}|N|.
\end{split}
\end{equation*}
Therefore $k$ is finite and $l \le \frac{\pi}{\sqrt{k}}$. Hence $\mathrm{conj}(N)\le \frac{\pi}{\sqrt{k}}$. The above also shows that $\fint_N Rd\mu_N<\infty$. {{Let us also remark that $R^-(p)\le \fint_{S_pN} \mathrm{Ric}^-(p, \theta)d\theta$ and so $\int_{N}R^-d\mu_N\le \int_N \fint_{S_pN} \mathrm{Ric}^-(p, \theta)d\theta d\mu_N<\infty$. Therefore the Lebesgue integral $\int_N R d\mu_N\in(-\infty, \infty]$ exists. }}

If $\mathrm{conj}(N)=\frac{\pi}{\sqrt{k}}$ and $k\in(0, \infty)$, then every inequality becomes an equality. In particular the vector fields $X$ above become Jacobi fields, and from this one can see that the sectional curvatures $K\left(\gamma^{\prime}, V\right)$ along each geodesic $\gamma$ are all equal to $k$. Therefore the $(N, g)$ has constant curvature $k$.
\end{proof}

This result can also be interpreted  as a lower bound of the total volume given a lower bound of the conjugate radius, slightly generalizing \cite[Theorem 2.1]{tuschmann2019smooth}:
\begin{corollary}
Let $N^n$ be a Riemannian manifold with finite volume and with conjugate radius $\mathrm{conj}(N) \ge l$.
Assume the integral $\int_{S N} \mathrm{Ric}^{-}(p, \theta) d \mu_{S N}(p, \theta)$ is finite.
Then $|N| \ge \frac{l^2}{n(n-1) \pi^2} \int_N R d\mu_N$.
The equality holds if and only if $N$ has constant sectional curvature $\frac{\pi^2}{l^2}$.
\end{corollary}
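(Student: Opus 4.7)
The plan is to extract the volume estimate directly from the chain of inequalities already derived inside the proof of Theorem \ref{thm gen green}, stopping one step before the scalar-curvature hypothesis is invoked. That proof shows, purely from $l\le\mathrm{conj}(N)$ together with the index form applied to $X_i(t)=\sin(\pi t/l)V_i$, Liouville's theorem, and the Fubini--Tonelli justification (for which the hypothesis $\int_{SN}\mathrm{Ric}^{-}\,d\mu_{SN}<\infty$ is exactly what is needed), that
$$
(n-1)\frac{\pi^{2}}{2l}\,\omega_{n-1}|N|\ \ge\ \frac{\omega_{n-1}}{n}\cdot\frac{l}{2}\int_{N}R\,d\mu_{N}.
$$
Dividing by $(n-1)\pi^{2}\omega_{n-1}/(2l)$ yields the asserted $|N|\ge\frac{l^{2}}{n(n-1)\pi^{2}}\int_{N}R\,d\mu_{N}$. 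I would write out the repeat of this derivation rather than cite it as a black box, because the conclusion is a rearrangement of an intermediate step in that proof, not a formal corollary of its statement.

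The only analytic point worth checking is the use of Fubini--Tonelli, but this is handled identically to Theorem \ref{thm gen green}: the map $t\mapsto\int_{SN}\mathrm{Ric}^{-}(\gamma'_{p,\theta}(t))\,d\mu_{SN}$ is constant in $t$ by Liouville, equal to the finite quantity $\int_{SN}\mathrm{Ric}^{-}(p,\theta)\,d\mu_{SN}$, which dominates the negative part of the integrand. I do not anticipate any genuine obstacle beyond reproducing this justification; in particular, no new hypothesis is required.

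For the equality case, the assumption $|N|=\frac{l^{2}}{n(n-1)\pi^{2}}\int_{N}R\,d\mu_{N}$ forces the intermediate inequality $I(X_i,X_i)\ge 0$ to be an equality for almost every $(p,\theta)\in SN$ and every $i$, and hence (by continuity in $(p,\theta)$ and smoothness of the data) for all such pairs. Then each $X_i=\sin(\pi t/l)V_i$ is a critical point of the index form among variations with fixed endpoints, so it satisfies the Jacobi equation; substituting back gives $K(\gamma',V_i)\equiv\pi^{2}/l^{2}$ along every unit-speed geodesic, so $(N,g)$ has constant sectional curvature $\pi^{2}/l^{2}$, exactly as in the final step of the proof of Theorem \ref{thm gen green}. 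The converse is immediate, since constant sectional curvature $\pi^{2}/l^{2}$ gives $R\equiv n(n-1)\pi^{2}/l^{2}$ and turns the stated inequality into an equality.
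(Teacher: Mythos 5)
Your proposal is correct and follows exactly the route the paper intends: the corollary is the rearrangement of the intermediate inequality $(n-1)\frac{\pi^{2}}{2l}\omega_{n-1}|N|\ge\frac{\omega_{n-1}}{n}\cdot\frac{l}{2}\int_{N}R\,d\mu_{N}$ from the proof of Theorem \ref{thm gen green}, with the Fubini--Tonelli and Liouville justifications carried over verbatim, and the equality analysis (null vectors of the index form are Jacobi fields, forcing $K\equiv\pi^{2}/l^{2}$) is the same as the theorem's rigidity step. No gaps.
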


\subsection{Jacobian and mean curvature estimates}\label{jac est}

\begin{proposition}\label{prop1}
Assume there is no cut point of $p$ along $\gamma_{p, \theta}$ on $[0, r]$. If $s_{k}>0$ on $(0, r]$, then
\begin{equation}\label{prop1 ineq}
F(p, r, \theta) \le \exp \left[-\int_{0}^{r} \int_{0}^{\tau} \frac{s_{k}(t)^{2}}{s_{k}(\tau)^{2}} {\mathrm{Ric}}_{k}\left(\gamma_{p, \theta}^{\prime}(t)\right) d t d \tau\right] F_k(r).
\end{equation}
If the equality holds for all $\theta\in S_pN$, then the geodesic ball of radius $r$ centered at $p$ is isometric to the geodesic ball of radius $r$ in the standard space form of curvature $k$.
\end{proposition}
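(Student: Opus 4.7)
The plan is to derive the inequality from the Riccati comparison for the mean curvature of geodesic spheres, then integrate twice.

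\textbf{Setting up the Riccati inequality.} Write $m(t) := (\log F)'(p,t,\theta) = H(p,t,\theta)$, the mean curvature of the geodesic sphere at $\gamma_{p,\theta}(t)$. Since $F$ satisfies the Jacobi equation along $\gamma_{p,\theta}$, letting $A$ denote the shape operator we have the standard identity $m' + \mathrm{tr}(A^2) = -\mathrm{Ric}(\gamma_{p,\theta}'(t))$. Applying Cauchy--Schwarz $\mathrm{tr}(A^2) \ge \frac{m^2}{n-1}$ gives
\begin{equation*}
m'(t) + \frac{m(t)^2}{n-1} \le -\mathrm{Ric}(\gamma_{p,\theta}'(t)).
\end{equation*}
The model quantity $m_k(t) := (n-1)\, s_k'(t)/s_k(t) = (\log F_k)'(t)$ satisfies the equality $m_k' + \frac{m_k^2}{n-1} = -(n-1)k$.

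\textbf{Subtracting and integrating once.} Let $G(t) := \log\bigl(F(p,t,\theta)/F_k(t)\bigr)$. Since $F(p,t,\theta) \sim t^{n-1}$ and $F_k(t) \sim t^{n-1}$ as $t \to 0^+$, we have $G(t) \to 0$ and $m(t), m_k(t)$ both $\sim (n-1)/t$, so $s_k(t)^2 G'(t) \to 0$ as $t\to 0^+$. Subtracting the Riccati (in)equalities,
\begin{equation*}
G''(t) + \frac{(G'(t))^2 + 2\,m_k(t)\,G'(t)}{n-1} \le -\mathrm{Ric}_k(\gamma_{p,\theta}'(t)).
\end{equation*}
Discarding the nonnegative term $(G')^2/(n-1)$ and substituting $2m_k/(n-1) = 2 s_k'/s_k$, this becomes $G''(t) + 2\frac{s_k'(t)}{s_k(t)} G'(t) \le -\mathrm{Ric}_k(\gamma_{p,\theta}'(t))$, which after multiplication by $s_k(t)^2$ is exactly
\begin{equation*}
\bigl(s_k(t)^2 G'(t)\bigr)' \le -s_k(t)^2\,\mathrm{Ric}_k(\gamma_{p,\theta}'(t)).
\end{equation*}
Integrating from $0$ to $\tau$ and using $s_k(0)^2 G'(0) = 0$ yields $G'(\tau) \le -\int_0^\tau \frac{s_k(t)^2}{s_k(\tau)^2}\mathrm{Ric}_k(\gamma_{p,\theta}'(t))\,dt$.

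\textbf{Integrating again and equality.} Integrating once more on $[0,r]$ and using $G(0)=0$ gives the claimed inequality after exponentiating. For the rigidity statement, equality in \eqref{prop1 ineq} for every $\theta \in S_pN$ forces $(G')^2 \equiv 0$ and the Cauchy--Schwarz step $\mathrm{tr}(A^2) \ge m^2/(n-1)$ to be an equality along every radial geodesic from $p$. The latter forces the shape operator to be a scalar multiple of the identity, i.e.\ $A = \frac{m_k}{n-1}\,\mathrm{Id} = \frac{s_k'}{s_k}\,\mathrm{Id}$; plugging into the matrix Riccati equation $A' + A^2 + R_{\gamma'} = 0$ then forces the radial sectional curvatures $K(\gamma',V)$ to equal $k$ for all parallel $V \perp \gamma'$, whence the geodesic ball $B_p(r)$ has constant sectional curvature $k$ and is isometric to the ball of radius $r$ in the space form.

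The only genuinely delicate point is handling the initial data at $t=0$ (both $F$ and $F_k$ vanish, $m$ and $m_k$ blow up), but the coincident $t^{n-1}$ asymptotics make both $G(0)=0$ and $s_k^2 G'(0)=0$ routine. The Riccati manipulation itself, though essentially the one in \cite{kwong2019quantitative}, is the substantive step; once it is cast into the divergence form $(s_k^2 G')' \le -s_k^2 \mathrm{Ric}_k$, two integrations finish the proof.
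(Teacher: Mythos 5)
Your proof is correct, but it takes a genuinely different route from the paper's. The paper works with the index form: it uses the identity $\frac{\partial}{\partial r}\log F(p,r,\theta)=\sum_{i}I(Y_i,Y_i)$ for the Jacobi fields $Y_i$ with $Y_i(0)=0$, $Y_i(r)=E_i(r)$, applies the index lemma with the test fields $X_i(t)=\frac{s_k(t)}{s_k(r)}E_i(t)$, and integrates by parts to land directly on the first-order inequality $\frac{\partial}{\partial r}\log F\le(\log F_k)'(r)-\int_0^r\frac{s_k(t)^2}{s_k(r)^2}\mathrm{Ric}_k(\gamma'(t))\,dt$; rigidity comes from the equality case of the index lemma (the $X_i$ become Jacobi fields). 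You instead trace the matrix Riccati equation, apply Cauchy--Schwarz $\mathrm{tr}(A^2)\ge m^2/(n-1)$, subtract the model equation, discard $(G')^2/(n-1)$, and integrate the resulting divergence form $(s_k^2G')'\le -s_k^2\,\mathrm{Ric}_k$ --- arriving at exactly the same intermediate inequality, so the two derivations are equivalent in output. What the index-form route buys is that the $t\to0$ singularity is absorbed into the boundary behavior of the test fields ($X_i(0)=0$), whereas your route must handle the singular initial condition of the Riccati equation explicitly; you do this correctly via the matching $t^{n-1}$ asymptotics, and this is indeed the only delicate point. Your route is more elementary (no index lemma) and is the form that generalizes to weighted or low-regularity settings. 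On rigidity, your Cauchy--Schwarz equality gives $A=\frac{s_k'}{s_k}\mathrm{Id}$, hence Jacobi fields of the form $s_k(t)V(t)$ with $V$ parallel, which yields the polar-coordinate expression $dt^2+s_k(t)^2g_{\mathbb S^{n-1}}$ of the metric just as in the paper's final paragraph; note that strictly speaking the Riccati equation only gives you that the \emph{radial} sectional curvatures equal $k$, so the isometry with the model ball should be concluded from the Jacobi-field description of the metric (as the paper does) rather than from ``constant sectional curvature,'' but this is a phrasing issue, not a gap.
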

\begin{proof}
This was proved in \cite[Theorem 1 (3)]{kwong2019quantitative}. Since the equality case was omitted in \cite{kwong2019quantitative}, for completeness we sketch the proof here.

Let $e_{1}, e_{2}, \cdots, e_{n}=\theta$ be a positively oriented orthonormal basis of $T_{p} N$ and $E_{i}$ be the parallel translation of $e_{i}$ along $\gamma_{\theta}$. Define $\left\{Y_{i}^{r, \theta}(t)\right\}_{i=1}^{n-1}$ to be the unique Jacobi fields along $\gamma_{\theta}$ with $Y_{i}^{r, \theta}(0)=0$ and $Y_{i}^{r, \theta}(r)=E_{i}(r)$. For convenience we simply denote $Y_{i}^{r, \theta}$ by $Y_{i}$ and $\gamma_{p, \theta}$ as $\gamma$.

It is not hard to see that the $(n-1)$-dimensional Jacobian satisfies $F(p, t, \theta)=\frac{\mathrm{det}\left(Y_{1}(t), \cdots, Y_{n-1}(t)\right)}{\mathrm{det}\left(Y_{1}^{\prime}(0), \cdots, Y_{n-1}^{\prime}(0)\right)}$. We have the formula (cf. \cite[p. 460]{heintze1978general})
\begin{equation}\begin{split}\label{(2.4)}
\frac{\partial }{\partial r}\left(\log F (p, r, \theta)\right)
=&\frac{\partial }{\partial r}\left[\log \left(\mathrm{det}\left(Y_{1}, \cdots, Y_{n-1}\right)\right)\right]\\
=&\sum_{i=1}^{n-1} \int_{0}^{r}\left(\left\langle Y_{i}^{\prime}, Y_{i}^{\prime}\right\rangle-\left\langle \textrm{Rm}\left(Y_{i}, \gamma^{\prime}\right) \gamma^{\prime}, Y_{i}\right\rangle\right) d t \\
=&\sum_{i=1}^{n-1} I\left(Y_{i}, Y_{i}\right)
\end{split}\end{equation}
where $I$ is the index form.
Let $X_{i}(t)=\frac{s_{k}(t)}{s_{k}(r)} E_{i}(t)$. Then by the index lemma
\begin{equation}\label{(2.5)}
I\left(Y_{i}, Y_{i}\right) \le I\left(X_{i}, X_{i}\right).
\end{equation}
By integration by parts,
\begin{equation}\begin{split}\label{(2.6)}
I\left(X_{i}, X_{i}\right) &=\int_{0}^{r}\left(-\left\langle X_{i}^{\prime \prime}, X_{i}\right\rangle-\left\langle \mathrm{Rm}\left(X_{i}, \gamma^{\prime}\right) \gamma^{\prime}, X_{i}\right\rangle\right) d t+\left\langle X_{i}(r), X_{i}^{\prime}(r)\right\rangle \\
&=-\int_{0}^{r} \frac{s_{k}(t)^{2}}{s_{k}(r)^{2}} (\langle \mathrm{Rm} (E_i, \gamma')\gamma', E_i\rangle -k)dt+\frac{s_{k}^{\prime}(r)}{s_{k}(r)}.
\end{split}\end{equation}
Summing \eqref{(2.6)} over $i=1, \cdots, n-1$ and combining with \eqref{(2.4)}, \eqref{(2.5)}, we have
\begin{equation}\label{F'}
\frac{\partial}{\partial r}(\log F(p, r, \theta)) \le -\int_{0}^{r} \frac{s_{k}(t)^2}{s_{k}(r)^2} {\mathrm{Ric}}_{k}\left(\gamma_{p, \theta}^{\prime}(t)\right) d t+\left(\log F_k\right)^{\prime}(r).
\end{equation}

Note that $\log F(p, r, \theta)-\log F_k(r) \rightarrow 0$ as $r\to 0^+$, so integrating this inequality gives the result.

Suppose the equality in \eqref{prop1 ineq} holds for all $\theta\in S_pN$. By the equality case of the index lemma, $X_i$ are Jacobi fields, and so $k \frac{s_k(t)}{s_k(r)}E_i=-\nabla_{\gamma'}\nabla_{\gamma'} X(t)=\mathrm{Rm}(X_i, \gamma')\gamma'=\frac{s_k(t)}{s_k(r)}\mathrm{Rm}(E_i, \gamma')\gamma' $. This implies that the sectional curvature of all planes spanned by $\gamma'$ and $E_i$ are equal to $k$. Therefore $ {\mathrm{Ric}}_{k}\left(\gamma_{p, \theta}^{\prime}(t)\right)=0$ and so the equality is actually $F(p, r, \theta)=F_k(r)$.

Moreover, let $(r, \{\theta^i\}_{i=1}^{n-1})$ be a geodesic polar coordinates, then from the above, the Jacobi field $J_i(t)=d\exp_p|_{t\theta}\left(\frac{\partial }{\partial \theta^i}\right)$ along $\gamma$ can be expressed as $s_k(t)V_i(t)$ for a parallel normal vector field $V_i(t)$ along $\gamma$. Moreover, $V_i(0)=J_i'(0)$. Let $\overline g$ be the standard Euclidean metric in $T_pN$. Then
\begin{align*}
g\left(J_i (r), J_j (r)\right)
=s_k(r)^2 g\left(V_i(r), V_j(r)\right)
=&s_k(r)^2 g\left(V_i(0), V_j(0)\right)\\
=&s_k(r)^2 g\left(J_i'(0), J_j'(0)\right)\\
=&s_k(r)^2 \overline g\left(J_i'(0), J_j'(0)\right)\\
=&s_k(r)^2 \overline g\left(\frac{\partial }{\partial \theta^i}, \frac{\partial }{\partial \theta^j}\right).
\end{align*}
So in polar coordinates, the metric can be expressed as $dt^2+s_{k}(t)^{2} g_{\mathbb{S}^{n-1}}$, i.e. the geodesic ball of radius $r$ is isometric to the geodesic ball of radius $r$ in the standard space form of curvature $k$.
\end{proof}
\begin{proposition}\label{prop2}
Assume there is no cut point of $p$ along $\gamma_{p, \theta}$ on $[0, r]$. Let $H(p, r, \theta)$ be the mean curvature at the point $\exp_{p}(r\theta)$ of the geodesic sphere of radius $r$ centered at $p$. If $s_{k}>0$ on $(0, r]$, then
\begin{equation*}\label{prop2 ineq}
H(p, r, \theta) \le \frac{{F_k}^{\prime}(r)}{{F_k}(r)}-\int_{0}^{r} \frac{s_{k}(t)^2}{s_{k}(r)^2} {\mathrm{Ric}}_{k}\left(\gamma_{p, \theta}^{\prime}(t)\right) d t.
\end{equation*}
If the equality holds for all $\theta\in S_pN$, then the geodesic ball of radius $r$ centered at $p$ is isometric to the geodesic ball of radius $r$ in the standard space form of curvature $k$.
\end{proposition}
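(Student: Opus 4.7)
The plan is to reduce the statement to the infinitesimal estimate \eqref{F'} already derived in the proof of Proposition \ref{prop1}. The key ingredient is the standard identity
\[
H(p, r, \theta) \;=\; \frac{\partial }{\partial r}\bigl(\log F(p, r, \theta)\bigr),
\]
which holds inside the cut locus of $p$: in geodesic polar coordinates centered at $p$, the mean curvature of the level hypersurface $\{r = \mathrm{const}\}$ with respect to the outward unit normal $\gamma_{p,\theta}'(r)$ equals $\Delta r = \partial_r \log F$. Once this is in hand, the claimed inequality is literally \eqref{F'} after noting that $(\log F_k)'(r) = F_k'(r)/F_k(r)$.

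For the equality case, I would argue that if equality holds at the given $r$ for every $\theta \in S_pN$, then each step leading to \eqref{F'} must itself be an equality. In particular, for each $\theta$ and each $i = 1, \dots, n-1$ the index-lemma inequality \eqref{(2.5)} is saturated, i.e. $I(Y_i, Y_i) = I(X_i, X_i)$. By the equality clause of the index lemma, $X_i(t) = (s_k(t)/s_k(r))\,E_i(t)$ is therefore itself a Jacobi field on $[0, r]$ along $\gamma_{p, \theta}$. The same computation as in the equality analysis of Proposition \ref{prop1},
\[
k\, \frac{s_k(t)}{s_k(r)}\, E_i \;=\; -\nabla_{\gamma'}\nabla_{\gamma'} X_i \;=\; \mathrm{Rm}(X_i, \gamma')\gamma',
\]
then forces $\mathrm{Rm}(E_i, \gamma')\gamma' = k E_i$. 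Since the orthonormal basis $\{e_1, \dots, e_{n-1}\}$ of $\theta^\perp \subset T_p N$ (and thus the parallel frame $E_i$) was arbitrary, every sectional curvature of a $2$-plane containing $\gamma_{p,\theta}'(t)$ equals $k$, for each $\theta$ and each $t \in [0, r]$. The Jacobi-field and parallel-transport calculation performed at the end of the proof of Proposition \ref{prop1} then yields, in geodesic polar coordinates centered at $p$, the model expression $g = dt^2 + s_k(t)^2\, g_{\mathbb{S}^{n-1}}$ on the ball of radius $r$, which is the desired isometry with the space form of constant curvature $k$.

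In terms of difficulty, essentially there is no substantive obstacle: the inequality is a one-line corollary of \eqref{F'} combined with the classical identity $H = \partial_r \log F$. The only point requiring minor care is the equality analysis, because equality is assumed at a single radius $r$ rather than after integration; however, the index-lemma equality is pointwise in the upper endpoint, so the rigidity argument of Proposition \ref{prop1} transfers verbatim.
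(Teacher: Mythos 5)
Your proposal is correct and follows essentially the same route as the paper: the inequality is obtained by combining the estimate \eqref{F'} with the identity $H(p,r,\theta)=\frac{\partial}{\partial r}\log F(p,r,\theta)$, and the equality case is handled by saturating the index-lemma inequality \eqref{(2.5)} so that the $X_i$ become Jacobi fields, after which the rigidity argument of Proposition \ref{prop1} applies verbatim. Your added observation that the index-lemma equality is pointwise in the upper endpoint (so no integration is needed before invoking rigidity) is exactly the point the paper leaves implicit.
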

\begin{proof}
From \eqref{F'}, we have $ \frac{\partial}{\partial r}(\log F(p, r, \theta))
\le \frac{d}{d r}(\log F_k(r))-\int_{0}^{r} \frac{s_{k}(t)^{2}}{s_{k}(r)^{2}} {\mathrm{Ric}_{k}}\left(\gamma_{p, \theta}^{\prime}(t)\right) d t$. It is well-known that $H(p, r, \theta)={\frac{\partial}{\partial r}(\log F(p, r, \theta))}$, from which the result follows. The equality case can be proved in the same way as Proposition \ref{prop1}.
\end{proof}
\subsection{Effect on area and volume}\label{area}

The area of a geodesic sphere in the space form of curvature $k$ is given $A_k(r):=\omega_{n-1}F_k(r)$, and the volume of a geodesic ball in the space form of curvature $k$ is given $V_k(r):=\int_{0}^{r}A_k(t)dt$.
Let $\overline A(r):=\fint_N A(\cdot, r)d\mu$ which is the average area of the geodesic spheres of radius $r$ in $N$. We are going to compare $\overline A(r)$ with $A_k(r)$. When $r$ is less than the injectivity radius of $p$, then $\int_{0}^{r} A(p, t)dt$ is the volume of the geodesic ball $B_p(r)$ centered at $p$ with radius $r$. Later on we will also estimate the total volume of $N$, and for this purpose we will consider not just the average of the volume of the geodesic balls, but also the metric balls. By a metric ball, we mean $\mathcal B_p(r):=\{x\in N: d(x, p)\le r\}$. We define $V(p, r):=|\mathcal B_p(r)|$ and $\overline V(r):=\fint_{N} V(\cdot, r)d\mu$ be its average. Of course, for small $r$, it is also the average of the volume of the geodesic balls.

First of all, we obtain a geometric quantity which is monotone decreasing when the average of $R_k$ is non-negative.
\begin{theorem}\label{thm monotone}
Let $\left(N^{n}, g\right)$ be a closed Riemannian manifold. Suppose $r>0$ is smaller than the injectivity radius of $N$, with $r<\frac{\pi}{\sqrt{k}}$ if $k>0$.
Then
\begin{equation}\label{ineq monotone}
\frac{d}{d r}\left(\fint_{S N} \log \left(\frac{F(p, r, \theta)}{F_{k}(r)}\right) d \mu(p, \theta)\right) \le -\frac{\overline{R_{k}}}{n} \phi_k(r),
\end{equation}
where $\phi_k$ is a non-negative function given by
\begin{equation}\label{phi}
\phi_{k}(r)=
\begin{cases}\frac{1}{2}\left(r \csc^{2}(\sqrt{k} r)-\frac{\cot (\sqrt{k} r)}{\sqrt{k}}\right) & \text { if } k>0 \\
\frac{r}{3} & \text { if } k=0 \\
\frac{1}{2}\left(\frac{ \coth (\sqrt{-k} r)}{\sqrt{-k}}-r\, \mathrm{csch}^{2}(\sqrt{-k} r)\right) & \text { if } k<0.
\end{cases}
\end{equation}
In particular, if $ \overline {R_k}\ge 0$, then the quantity $\fint_{S N} \log \left(\frac{F(p, r, \theta)}{F_{k}(r)}\right) d \mu(p, \theta)$ is monotone decreasing in $r$.

The equality holds for some $r>0$ if and only if $(N, g)$ has constant sectional curvature $k$.
\end{theorem}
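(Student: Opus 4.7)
The plan is to turn Proposition \ref{prop2} into a derivative inequality and then average it over $SN$, exploiting the Liouville theorem so that the right-hand side reduces to the average scalar curvature. Since $H(p,r,\theta)=\partial_r\log F(p,r,\theta)$ and (within the injectivity radius) differentiation in $r$ commutes with the integral over $SN$, the left-hand side of \eqref{ineq monotone} equals $\fint_{SN}\bigl(H(p,r,\theta)-F_k'(r)/F_k(r)\bigr)\,d\mu(p,\theta)$. Proposition \ref{prop2} then supplies the pointwise estimate $H(p,r,\theta)-F_k'(r)/F_k(r)\le -\int_0^r (s_k(t)^2/s_k(r)^2)\,\mathrm{Ric}_k(\gamma'_{p,\theta}(t))\,dt$, which reduces the problem to controlling the average over $SN$ of the right-hand side.

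Next, I would integrate this pointwise inequality over $SN$ and apply Fubini to swap the $dt$ and $d\mu_{SN}$ integrations. The Liouville theorem (already used in the proof of Theorem \ref{thm gen green}) makes $\fint_{SN}\mathrm{Ric}_k(\gamma'_{p,\theta}(t))\,d\mu_{SN}$ independent of $t$, so it equals $\fint_{SN}\mathrm{Ric}_k(p,\theta)\,d\mu_{SN}$. The standard fiberwise identity $\int_{S_pN}\mathrm{Ric}_k(p,\theta)\,d\theta=\frac{\omega_{n-1}}{n}R_k(p)$ then identifies this average as $\overline{R_k}/n$. Pulling this constant outside produces precisely the inequality \eqref{ineq monotone} with $\phi_k(r):=\int_0^r s_k(t)^2/s_k(r)^2\,dt$, and a direct calculation (using $\sin^2 t=(1-\cos 2t)/2$ for $k>0$ and its hyperbolic analogue for $k<0$) matches this with the piecewise formula in \eqref{phi}. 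Nonnegativity of $\phi_k$ is evident from its integral definition, and the monotone decreasing claim for $\overline{R_k}\ge 0$ is then immediate.

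For the rigidity, suppose the averaged inequality is an equality at some $r_0$. Each pointwise quantity $H(p,r_0,\theta)-F_k'(r_0)/F_k(r_0)+\int_0^{r_0}(s_k(t)^2/s_k(r_0)^2)\mathrm{Ric}_k(\gamma'_{p,\theta}(t))\,dt$ is nonpositive by Proposition \ref{prop2}, so its average vanishing forces it to vanish almost everywhere on $SN$, hence everywhere by smoothness in $(p,\theta)$. The equality case of Proposition \ref{prop2} then gives that for every $p$ the geodesic ball $B_p(r_0)$ is isometric to the $r_0$-ball in the model space of curvature $k$, and since $p$ is arbitrary, $(N,g)$ has constant sectional curvature $k$ globally. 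The converse is a direct calculation in the model. I expect the only step requiring any real care to be the trigonometric identification of $\phi_k$; the conceptual core---Liouville combined with Proposition \ref{prop2}---is already available from the preceding subsections.
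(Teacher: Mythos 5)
Your proposal is correct and follows essentially the same route as the paper: the pointwise inequality of Proposition \ref{prop2} (equivalently \eqref{F'}), integration over $SN$ with Fubini, the Liouville theorem to identify $\fint_{SN}\mathrm{Ric}_k(\gamma'_{p,\theta}(t))\,d\mu_{SN}$ with $\overline{R_k}/n$, the explicit evaluation of $\phi_k(r)=\int_0^r s_k(t)^2/s_k(r)^2\,dt$, and the rigidity via the equality case of Proposition \ref{prop2}. The only cosmetic difference is that you phrase the left-hand side through $H=\partial_r\log F$ rather than $\partial_r\log(F/F_k)$ directly, which is the same computation.
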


\begin{proof}
First of all, we note that if $k>0$ and $\overline {R_k}\ge 0$, then by Theorem \ref{green}, $r<\frac{\pi}{\sqrt{k}}$.
From Proposition \ref{prop2},
\begin{align*}
\frac{\partial }{\partial r}\left(\log \left(\frac{F(p, r, \theta)}{F_k(r)}\right)\right)
=&\frac{\partial }{\partial r}\left(\log {F(p, r, \theta)}-\log {F_k(r)} \right)\\
\le&-\int_{0}^{r} \frac{s_{k}(t)^{2}}{s_{k}(r)^{2}} {\mathrm{Ric}}_{k}\left(\gamma_{p, \theta}^{\prime}(t)\right) d t.
\end{align*}
Integrating over $(p, \theta)\in SN$ then gives
\begin{align*}
\frac{d }{d r} \left({\fint_{SN}\log\left(\frac{ F(p, r, \theta)}{ F_{k}(r)}\right)d \mu(p, \theta)} \right)
\le&- \fint_{SN} \int_{0}^{r} \frac{s_{k}(t)^{2}}{s_{k}(r)^{2}} {\mathrm{Ric}}_{k}\left(\gamma_{p, \theta}^{\prime}(t)\right) d t d\mu_{SN}\\
=&- \int_{0}^{r} \frac{s_{k}(t)^{2}}{s_{k}(r)^{2}}\fint_{SN} {\mathrm{Ric}}_{k}\left(\gamma_{p, \theta}^{\prime}(t)\right)d\mu_{SN} d t.
\end{align*}
Observe that the integral $\int_{SN} {\mathrm{Ric}}_{k}\left(\gamma_{p, \theta}^{\prime}(t)\right) d \mu$ is actually independent of $t$ since the geodesic flow is a diffeomorphism of $SN$ preserving $d \mu_{SN}$ \cite[p.117]{chavel1984eigenvalues}.
So we have $\fint_{SN} {\mathrm{Ric}}_{k}\left(\gamma_{p, \theta}^{\prime}(t)\right) d \mu_{SN}=\fint_{SN} {\mathrm{Ric}}_{k}\left(\gamma_{p, \theta}^{\prime}(0)\right) d \mu_{SN}=\frac{1}{n|N|}\int_N {R}_k d \mu_N=\frac{1}{n}\overline { {R}_k }$.

\begin{equation*}\label{monotone}
\begin{split}
\frac{d}{d r}\left(\fint_{S N} \log \left(\frac{F(p, r, \theta)}{F_{k}(r)}\right) d \mu(p, \theta)\right)
\le&-\frac{ \overline{R_{k}}}{n}\int_{0}^{r} \frac{s_{k}(t)^{2}}{s_{k}(r)^{2}}dt.
\end{split}
\end{equation*}

By direct computation,
\begin{equation*}
\phi_{k}(r):=
\int_{0}^{r} \frac{s_{k}(t)^{2}}{s_{k}(r)^{2}} d t
= \begin{cases}
\frac{1}{2}\left(r \csc^{2}(\sqrt{k} r)-\frac{\cot (\sqrt{k} r)}{\sqrt{k}}\right) & \text { if } k>0 \\
\frac{r}{3} & \text { if } k=0 \\
\frac{1}{2}\left(\frac{ \coth (\sqrt{-k} r)}{\sqrt{-k}}-r \mathrm{csch}^{2}(\sqrt{-k} r)\right) & \text { if } k<0
\end{cases}.
\end{equation*}
If the equality holds for some $r>0$, then by Proposition \ref{prop2} every geodesic ball of radius $r$ is isometric to the geodesic ball of radius $r$ in the standard space form of curvature $k$, and so $(N, g)$ has constant sectional curvature $k$.
\end{proof}
\begin{corollary}
Let $\left(N^{n}, g\right)$ be a closed Riemannian manifold. Suppose $r>0$ is smaller than the injectivity radius of $N$, with $r<\frac{\pi}{\sqrt{k}}$ if $k>0$. Then
$$
\min_{(p, \theta)} F(p, r, \theta) \le \exp \left(-\frac{\overline{R_{k}}}{n} \psi_{k}(r)\right) F_{k}(r),
$$
where $\psi_k$ is a non-negative function given by
$\psi_{k}(r)=
\begin{cases}
\frac{1}{2}\left(\frac{1}{k}-\frac{r \cot (\sqrt{k} r)}{\sqrt{k}}\right)\quad &\textrm{if }k>0\\
\frac{r^2}{6}\quad &\textrm{if }k=0\\
\frac{1}{2}\left(\frac{1}{k}+\frac{r\coth (\sqrt{-k} r)}{\sqrt{-k}}\right)\quad &\textrm{if }k<0.
\end{cases}
$

The equality holds if and only if $(N, g)$ has constant sectional curvature $k$.
\end{corollary}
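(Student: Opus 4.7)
The plan is to reduce the pointwise minimum of $F(\cdot,r,\cdot)$ over $SN$ to the already established $SN$-average inequality by integrating Theorem \ref{thm monotone} from $0$ to $r$ and then using the trivial fact that the minimum is bounded by the average. The three ingredients are (i) the boundary behavior of $\log(F/F_k)$ at $0$, (ii) integration of \eqref{ineq monotone}, and (iii) the elementary bound $\min\le\textrm{average}$ on a compact set.

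For (i), since $F(p,t,\theta)\sim t^{n-1}$ and $F_k(t)=s_k(t)^{n-1}\sim t^{n-1}$ uniformly in $(p,\theta)$ as $t\to 0^+$, we have $\log(F(p,t,\theta)/F_k(t))\to 0$ uniformly on $SN$, so the $SN$-average of this log vanishes at $t=0$. For (ii), integrating \eqref{ineq monotone} over $[0,r]$ yields
$$
\fint_{SN}\log\!\left(\frac{F(p,r,\theta)}{F_k(r)}\right) d\mu_{SN}(p,\theta)\le -\frac{\overline{R_k}}{n}\int_0^r \phi_k(s)\,ds.
$$
A direct differentiation (using limits such as $r\cot(\sqrt{k}r)\to 1/\sqrt{k}$ and $r\coth(\sqrt{-k}r)\to 1/\sqrt{-k}$ as $r\to 0^+$) verifies in each of the three cases that $\psi_k$ is the antiderivative of $\phi_k$ vanishing at $0$, so the right-hand side equals $-\frac{\overline{R_k}}{n}\psi_k(r)$. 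For (iii), since $r<\textrm{inj}(N)$, the function $\log(F(p,r,\theta)/F_k(r))$ is continuous on the compact manifold $SN$, so its minimum over $SN$ is no larger than its average; exponentiating and multiplying by $F_k(r)$ gives the stated inequality.

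For the equality characterization, equality in the corollary forces $\min=\textrm{average}$ over $SN$, which makes $\log(F(\cdot,r,\cdot)/F_k(r))$ constant on $SN$, and simultaneously forces the integrated inequality above to be an equality; the latter makes the differential inequality \eqref{ineq monotone} an equality for a.e.\ $s\in(0,r]$, and by the equality case of Theorem \ref{thm monotone} this implies $(N,g)$ has constant sectional curvature $k$. Conversely, on constant curvature $k$ we have $F(p,r,\theta)=F_k(r)$ and $\overline{R_k}=0$, so both sides of the corollary reduce to $F_k(r)$. I do not anticipate any genuine obstacle; the result is essentially the monotonicity theorem combined with $\min\le\textrm{avg}$, the only care needed being the verification that $\psi_k$ is the correct antiderivative and the handling of the equality case via the already-proven equality case of Theorem \ref{thm monotone}.
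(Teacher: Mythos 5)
Your proposal is correct and follows essentially the same route as the paper: integrate the differential inequality \eqref{ineq monotone} from $0$ to $r$ (using $\log(F/F_k)\to 0$ at $0$ and $\psi_k=\int_0^r\phi_k$), bound the average of $\log(F/F_k)$ from below by $\log(\min F/F_k)$, and exponentiate, with the equality case reduced to that of Theorem \ref{thm monotone}. The paper's proof is just a terser version of the same argument.
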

\begin{proof}
Integrating \eqref{ineq monotone}, we have
\begin{align*}
-\frac{\overline{R_{k}}}{n} \psi_{k}(r)
\ge \fint_{S N} \log \left(\frac{F(p, r, \theta)}{F_{k}(r)}\right) d \mu(p, \theta)
\ge \log \left(\frac{\min_{(p, \theta)} F(p, r, \theta)}{ F_{k}(r)}\right),
\end{align*}
where $\psi_k(r)=\int_{0}^{r}\phi_k(t)dt
$ is given by the result. From this the result follows.
The equality case follows from the equality case of Theorem \ref{thm monotone}.
\end{proof}

The following inequality is inspired by \cite{takahashi1999inverse}, and can be regarded as an ``inverse'' Jensen inequality.
\begin{lemma}\label{lem1}
Let $f$ be a measurable function on a probability measure space $(\Omega, \mathcal {F}, \mu)$ with $m_1\le f\le m_2$. Then
\begin{equation}\label{3}
\int_\Omega e^f d \mu \le a \int_\Omega f d \mu +b,
\end{equation}
where
\begin{equation*} \label{ab}
\begin{cases}
a=\frac{e^{m_2}-e^{m_1}}{m_2-m_1}, \quad &b=\frac{m_2 e^{m_1}-m_1 e^{m_2}}{m_2-m_1} \quad \textrm{if }m_1<m_2, \\
a=e^m, \quad &b=(1-m)e^m \quad \textrm{if }m_1=m_2=m.
\end{cases}
\end{equation*}
The equality holds if and only if $\{f=m_1\}$ or $\{f=m_2\}$ has measure $1$.
\end{lemma}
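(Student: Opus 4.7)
My plan is to reduce the whole statement to a one-variable pointwise inequality coming from the convexity of $\exp$. Since $\exp$ is strictly convex on $\mathbb{R}$, its graph lies on or below every secant chord on the closed interval between the two chord endpoints. In particular, for $m_1 < m_2$ and $x \in [m_1, m_2]$ the chord joining $(m_1, e^{m_1})$ to $(m_2, e^{m_2})$ is exactly the affine function $\ell(x) = ax + b$ with the constants $a, b$ stated in the lemma: solving the linear system $\ell(m_1) = e^{m_1}$, $\ell(m_2) = e^{m_2}$ reproduces the formulas immediately. Hence $e^x \le ax + b$ on $[m_1, m_2]$, with equality iff $x \in \{m_1, m_2\}$ by strict convexity.

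Inequality \eqref{3} then follows by substituting $x = f(\omega)$, which is legitimate since $m_1 \le f \le m_2$, and integrating against $\mu$; the constant $b$ integrates to $b$ because $\mu(\Omega) = 1$. The degenerate case $m_1 = m_2 = m$ forces $f \equiv m$ $\mu$-a.e., and one checks that $a = e^m$ and $b = (1-m)e^m$ are the limits of the generic formulas as $m_2 \to m_1$, making \eqref{3} an identity.

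For the equality case when $m_1 < m_2$, strict convexity implies that the non-negative integrand $ax + b - e^x$ vanishes on $[m_1, m_2]$ only at the two endpoints. So $\int_\Omega (af + b - e^f)\, d\mu = 0$ forces $f \in \{m_1, m_2\}$ $\mu$-a.e., from which the stated sharpness follows (together with the hypothesis $m_1 \le f \le m_2$). I anticipate no real obstacle in any step — the argument is purely a secant-line convexity estimate followed by integration against a probability measure; the only mildly delicate point is bookkeeping the two endpoint cases of the affine chord and verifying that the degenerate $m_1 = m_2$ formulas agree with the $m_2 \to m_1$ limit.
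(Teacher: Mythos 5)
Your proof is correct and takes essentially the same route as the paper's: both obtain the secant-line estimate $e^{y}\le ay+b$ on $[m_1,m_2]$ from the convexity of $\exp$ and then integrate against the probability measure. You additionally work out the equality case (the paper's proof omits it), correctly arriving at $f\in\{m_1,m_2\}$ $\mu$-a.e.
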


\begin{proof}
The constants $a$ and $b$ are chosen such that
$e^{m_2}=a m_2+b$ and $e^{m_1}=a m_1+b$, and so $e^{y} \le a y+b$ for all $y\in[m_1, m_2]$ by convexity.
Therefore
\begin{equation*}
\int_{\Omega} e^f d \mu \le \int_{\Omega}(a f+b) d \mu=a \int_{\Omega} f d \mu+b.
\end{equation*}
\end{proof}

The following theorem is one of the main results in this paper. The condition \eqref{cond1}, which is a technical condition to ensure the commutativity of the integrals, can be replaced by the condition that $(N, g)$ is closed if desired.
\begin{theorem}\label{thm1}
Let $(N^n, g)$ be a complete Riemannian manifold.
Assume the following:
\begin{enumerate}
\item \label{cond0}
$r>0$ is a number smaller than the injectivity radius of $N$, with $r<\frac{\pi}{\sqrt{k}}$ if $k>0$.
\item \label{cond1}
{{$(N, g)$ has finite volume and the integral $\int_{S N} \mathrm{Ric}_k^{-}(p, \theta) d \mu(p, \theta)_{SN}$ is finite, where $\mathrm{Ric}_k^-(p, \theta)=\max\{-\mathrm{Ric}_k(p, \theta), 0\}$. }}
\item\label{cond2}
We have $c_1\le \int_{0}^{r} \int_{0}^{\tau} \frac{s_{k}(t)^{2}}{s_{k}(\tau)^{2}} {\mathrm{Ric}}_{k}\left(\gamma_{p, \theta}^{\prime}(t)\right) d t d \tau\le c_2$ for all $(p, \theta)\in SN$.
\end{enumerate}
Then the average area of the geodesic spheres of radius $r$ satisfies
\begin{equation}\label{3'}
\overline A(r) \le \left(b-a \frac{\overline{ {R}_k}}{n} \sigma_{k}(r)\right) A_k(r).
\end{equation}

Here $ \sigma_{k}(r)=
\begin{cases}\frac{1}{2 k}(1-\sqrt{k} r \cot (\sqrt{k} r)) & \text { if } k>0 \\ \frac{r^{2}}{6} & \text { if } k=0 \\ \frac{1}{2 k}(1-\sqrt{-k} r \mathrm{coth}(\sqrt{-k} r)) & \text { if } k<0
\end{cases}
$, and $a=\frac{e^{-c_1}-e^{-c_2}}{c_2-c_1}$, $b=\frac{c_2 e^{-c_1}-c_1e^{-c_2}}{c_2-c_1}$ if $c_1<c_2$.
If $c_1=c_2=c$, we set $a=e^{-c}$ and $b=(1+c) e^{-c}$.

The equality holds if and only if $(N, g)$ has constant sectional curvature $k$.
\end{theorem}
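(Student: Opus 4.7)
The plan is to apply Proposition \ref{prop1} pointwise on $SN$, average over $SN$, and then use Lemma \ref{lem1} to push the averaging inside the exponential. Writing $\Phi(p,\theta) := \int_{0}^{r}\int_{0}^{\tau} \frac{s_k(t)^2}{s_k(\tau)^2} \mathrm{Ric}_k(\gamma_{p,\theta}'(t))\, dt\, d\tau$, Proposition \ref{prop1} gives $F(p,r,\theta)\le F_k(r)\, e^{-\Phi(p,\theta)}$ for every $(p,\theta)\in SN$. Since $r<\mathrm{inj}(N)$, the geodesic sphere area decomposes as $A(p,r)=\int_{S_pN}F(p,r,\theta)\, d\theta$, and the identity $|SN|=\omega_{n-1}|N|$ gives $\overline{A}(r)=\omega_{n-1}\fint_{SN}F(p,r,\theta)\, d\mu_{SN}$; hence
\begin{equation*}
\overline{A}(r)\le A_k(r)\fint_{SN}e^{-\Phi(p,\theta)}\, d\mu_{SN}.
\end{equation*}

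By assumption \eqref{cond2}, $c_1\le \Phi\le c_2$, so applying Lemma \ref{lem1} to $f=-\Phi$ (with $m_1=-c_2$, $m_2=-c_1$) on the probability space $(SN, d\mu_{SN}/|SN|)$ yields exactly the constants $a,b$ stated in the theorem together with the bound $\fint_{SN}e^{-\Phi}\,d\mu_{SN}\le b - a\fint_{SN}\Phi\, d\mu_{SN}$. It remains to identify this last average. Finiteness of $\int_{SN}\mathrm{Ric}_k^{-}$ from assumption \eqref{cond1}, together with the pointwise bound \eqref{cond2}, justifies Fubini exactly as in the proof of Theorem \ref{thm gen green}, so
\begin{equation*}
\fint_{SN}\Phi\, d\mu_{SN}=\int_{0}^{r}\!\int_{0}^{\tau}\frac{s_k(t)^2}{s_k(\tau)^2}\left(\fint_{SN}\mathrm{Ric}_k(\gamma_{p,\theta}'(t))\, d\mu_{SN}\right)dt\, d\tau.
\end{equation*}
By Liouville's theorem the inner average is independent of $t$, and by the same coarea computation as in Theorem \ref{thm monotone} it equals $\overline{R_k}/n$. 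Direct integration of $\int_{0}^{r}\!\int_{0}^{\tau}s_k(t)^2/s_k(\tau)^2\, dt\, d\tau$ in each of the three cases for $k$ produces $\sigma_k(r)$ (this is the quantity $\psi_k(r)$ from the previous corollary, rewritten). Combining the three estimates yields \eqref{3'}.

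For the rigidity statement, equality in \eqref{3'} forces equality throughout the chain, and in particular equality in Proposition \ref{prop1} for almost every $(p,\theta)\in SN$; continuity in $(p,\theta)$ promotes this to every $p$ and every $\theta\in S_pN$, so the equality clause of Proposition \ref{prop1} shows that every geodesic ball of radius $r$ in $N$ is isometric to the geodesic ball of radius $r$ in the space form of curvature $k$, forcing constant sectional curvature $k$ throughout $N$. The only genuine technical point is the verification of Fubini and of the $t$-independence of the inner integral in the possibly non-compact setting, but the bound \eqref{cond2} makes the integrand of $\Phi$ dominated, reducing this to the same bookkeeping used in Theorem \ref{thm gen green}.
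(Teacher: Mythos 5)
Your proposal is correct and follows essentially the same route as the paper: Proposition \ref{prop1} pointwise, averaging over $SN$ with the inverse Jensen inequality (Lemma \ref{lem1}) applied to $f=-\Phi$, Fubini--Tonelli plus Liouville's theorem to identify $\fint_{SN}\Phi$ with $\frac{\overline{R_k}}{n}\sigma_k(r)$, and the equality case of Proposition \ref{prop1} for rigidity. The only quibble is that the domination needed for Fubini comes not from the bound \eqref{cond2} on $\Phi$ itself but from the uniform boundedness of the kernel $\frac{s_k(t)^2}{s_k(\tau)^2}$ on $\{0<t<\tau<r\}$ combined with condition \eqref{cond1}, which is exactly the bookkeeping you already cite from Theorem \ref{thm gen green}.
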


\begin{proof}
From Proposition \ref{prop1},
\begin{equation*}\label{ineq F}
F(p, r, \theta) \le \exp \left[-\int_{0}^{r} \int_{0}^{\tau} \frac{s_{k}(t)^2}{s_{k}(\tau)^2} {\mathrm{Ric}}_{k}\left(\gamma_{p, \theta}^{\prime}(t)\right) d t d \tau\right] F_k(r).
\end{equation*}
Integrating over $(p, \theta) \in SN$ against the probability measure $ \frac{d \mu_{SN}}{|N| \omega_{n-1}} $, we have
\begin{equation}\label{ineq}
\begin{split}
&\frac{1}{|N| \omega_{n-1}} \int_{(p, \theta) \in SN} F(p, r, \theta) d \mu_{SN}\\
\le& \left(\fint_{SN} \exp \left[-\int_{0}^{r} \int_{0}^{\tau} \frac{s_{k}(t)^{2}}{s_{k}(\tau)^{2}} {\mathrm{Ric}}_{k}\left(\gamma_{p, \theta}^{\prime}(t)\right) d t d \tau\right]d \mu_{SN}\right) F_k(r) \\
\le& \left(-a\fint_{SN}\int_{0}^{r} \int_{0}^{\tau} \frac{s_{k}(t)^{2}}{s_{k}(\tau)^{2}} {\mathrm{Ric}}_{k}\left(\gamma_{p, \theta}^{\prime}(t)\right) d t d \tau d \mu_{SN} +b\right)F_k(r)\\
=&\left(-a\int_{0}^{r}\int_{0}^{\tau}\frac{s_{k}(t)^{2}}{s_{k}(\tau)^{2}} \fint_{SN} {\mathrm{Ric}}_{k}\left(\gamma_{p, \theta}^{\prime}(t)\right)d \mu_{SN}\, d t d \tau +b\right)F_k(r).
\end{split}
\end{equation}
Here we have used \eqref{3} in the second line.
{{Let us comment on the interchange of the integrals in the last line. We will see that on $D=\{0<\tau<r, \, 0<t<\tau\}$, $\frac{s_k(t)^2}{s_k\tau)^2}$ is uniformly bounded by a constant $C>0$. Obviously we only need to consider the local boundedness of this function near $(0, 0)$. There exists $\delta>0$ and $\varepsilon\in(0, 1)$ such that for $0<x<\delta$, we have $\varepsilon x\le s_k(x)\le \frac{1}{\varepsilon}x $. As $D=\{(m\tau, \tau): 0<\tau<r, 0<m<1\}$, we have
$0\le\frac{s_k(m\tau)}{s_k(\tau)}\le \frac{1}{\varepsilon^2}m\le \frac{1}{\varepsilon^2}$ if $0<\tau<\delta$. From this the claim follows. The negative part of the integrand $\frac{s_{k}(t)^{2}}{s_{k}(\tau)^{2}} \mathrm{Ric}_{k}\left(\gamma_{p, \theta}^{\prime}(t)\right)$ is uniformly bounded by $C\mathrm{Ric}_{k}^-\left(\gamma_{p, \theta}^{\prime}(t)\right)$ on $D=\{0<\tau<r, \, 0<t<\tau\}$ and as in the proof of Theorem \ref{thm gen green}, we can interchange the integral signs. }}

Observe that the integral $\int_{SN} {\mathrm{Ric}}_{k}\left(\gamma_{p, \theta}^{\prime}(t)\right) d \mu_{SN}$ is actually independent of $t$ since the geodesic flow is a diffeomorphism of $SN$ preserving $d \mu_{SN}$.
We then have
\begin{equation}\label{average Ric}
\fint_{SN} {\mathrm{Ric}}_{k}\left(\gamma_{p, \theta}^{\prime}(t)\right) d \mu_{SN}
=\fint_{SN} {\mathrm{Ric}}_{k}(p, \theta) d \mu_{SN}
=\frac{1}{n|N|}\int_N {R}_k d \mu_N=\frac{1}{n}\overline { {R}_k }.
\end{equation}
Note that $\overline {R_k}\in(-\infty, \infty]$ exists by the argument in Theorem \ref{thm gen green}.
Combining \eqref{ineq} and \eqref{average Ric} then gives
\begin{equation*}
\begin{aligned}
\frac{1}{|N| \omega_{n-1}} \int_{(p, \theta) \in SN} F(p, r, \theta) d \mu_{SN}
& \le \left(-a\frac{\overline{ {R}_{k}}}{n} \int_{0}^{r}\int_{0}^{\tau}\frac{s_{k}(t)^{2}}{s_{k}(\tau)^{2}} d t d \tau +b\right)F_k(r).
\end{aligned}
\end{equation*}
We compute
\begin{equation}\begin{split}\label{sigma}
\sigma_{k}(r):=
\int_{0}^{r} \int_{0}^{\tau} \frac{s_{k}(t)^{2}}{s_{k}(\tau)^{2}} d t d \tau
=
\begin{cases}
\frac{1}{2 k}(1-\sqrt{k} r \cot (\sqrt{k} r))&\quad \textrm{ if }k>0\\
\frac{r^2}{6} &\quad \textrm{ if }k=0\\
\frac{1}{2k} \left(1 -\sqrt{-k}r \coth \left(\sqrt{-k} r \right)\right) &\quad \textrm{ if }k<0.
\end{cases}
\end{split}\end{equation}
So
\begin{equation*}\label{comp}
\frac{1}{ \omega_{n-1}} \fint_{N} A(p, r) d \mu(p) \le \left(-a \frac{\overline{ {R}_k}}{ n}\sigma_k(r) +b\right) F_k(r).
\end{equation*}

Suppose the equality holds. Then from the equality case of Proposition \ref{prop1}, we deduce that all geodesic balls $B(p, r)$ are isometric to the geodesic ball of radius $r$ in the space form of curvature $k$. Therefore $(N, g)$ is a space form of curvature $k$.
\end{proof}

\begin{corollary}\label{cor1}
Let $\left(N^{n}, g\right)$ be a {{complete}} Riemannian manifold. Assume that the {{conditions \eqref{cond0} and \eqref{cond1}}} in Theorem \ref{thm1} hold.
\begin{enumerate}
\item \label{item 1}
Suppose $0 \le {\mathrm{Ric}}_{k} \le \kappa$, then $\overline A(r) \le \left(1-\frac{1-e^{-\kappa \sigma_{k}(r)}}{n \kappa} \overline {R_k}\right) A_{k}(r)$ and for all $s>0$,
\begin{align}\label{Chebyshev}
\frac{|\{p\in N:A(p, r)\ge s A_k(r)\}|}{|N|}\le \frac{1}{s}\left(1-\frac{1-e^{-\kappa \sigma_{k}(r)}}{n \kappa} \overline {R_k}\right).
\end{align}
\item \label{item 2}
Suppose $\kappa_1\le {\mathrm{Ric}}_{k}\le \kappa_2$ and $\frac{ \overline {{R}_{k}}}{n} \ge (1-t) \kappa_{1}+t \kappa_{2}$, where $t=\frac{e^{-\kappa_{1} \sigma_{k}(r)}-1}{e^{-\kappa_{1} \sigma_{k}(r)}-e^{-\kappa_{2} \sigma_{k}(r)}}$ and $\sigma_k$ is defined by \eqref{sigma}, then $\overline {A}(r) \le A_{k}(r)$.
\end{enumerate}
The equality holds if and only if $(N, g)$ has constant sectional curvature $k$.
\end{corollary}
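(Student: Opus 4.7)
The plan is to apply Theorem \ref{thm1} in each case with an appropriate choice of the constants $c_1, c_2$ bounding the inner double integral, and then simply unpack the resulting formulas for $a$ and $b$. In both parts, the pointwise bounds on $\mathrm{Ric}_k$ together with the identity $\int_0^r\int_0^\tau s_k(t)^2/s_k(\tau)^2\,dt\,d\tau = \sigma_k(r)$ give the needed two-sided control of the inner integral. The finiteness hypothesis $\int_{SN}\mathrm{Ric}_k^-\,d\mu_{SN}<\infty$ of Theorem \ref{thm1} is automatic: in part \eqref{item 1} the lower bound forces $\mathrm{Ric}_k^-\equiv 0$, while in part \eqref{item 2} one has $\mathrm{Ric}_k^- \le \max\{-\kappa_1,0\}$, which integrates to a finite quantity against the finite measure $d\mu_{SN}$.

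For part \eqref{item 1}, I would take $c_1 = 0$ and $c_2 = \kappa\sigma_k(r)$. Plugging into the formulas from Theorem \ref{thm1} gives $b = 1$ and $a = (1 - e^{-\kappa\sigma_k(r)})/(\kappa\sigma_k(r))$, so \eqref{3'} immediately reduces to the claimed bound on $\overline A(r)$. The distribution estimate \eqref{Chebyshev} is then nothing more than Markov's inequality applied to the non-negative function $p \mapsto A(p, r)$ with respect to the probability measure $d\mu_N/|N|$.

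For part \eqref{item 2}, I would take $c_1 = \kappa_1\sigma_k(r)$ and $c_2 = \kappa_2\sigma_k(r)$. The desired conclusion $\overline A(r) \le A_k(r)$ is equivalent, via \eqref{3'}, to $b - a(\overline{R_k}/n)\sigma_k(r) \le 1$, i.e.\ to the lower bound $\overline{R_k}/n \ge (b-1)/(a\sigma_k(r))$. The one nontrivial step is the algebraic identity
\begin{equation*}
\frac{b-1}{a\,\sigma_k(r)} \;=\; (1-t)\kappa_1 + t\kappa_2, \qquad t = \frac{e^{-\kappa_1\sigma_k(r)}-1}{e^{-\kappa_1\sigma_k(r)}-e^{-\kappa_2\sigma_k(r)}},
\end{equation*}
which, setting $A = e^{-\kappa_1\sigma_k(r)}$ and $B = e^{-\kappa_2\sigma_k(r)}$, reduces to verifying that both sides collapse to $\big(\kappa_2(A-1) - \kappa_1(B-1)\big)/(A-B)$ after clearing denominators. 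Given this identity, the standing hypothesis on $\overline{R_k}/n$ is exactly what is needed.

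The equality characterization in both parts is inherited verbatim from the equality case of Theorem \ref{thm1}. There is no serious conceptual obstacle here; the main care required is sign-tracking in the exponential differences when $\kappa_1, \kappa_2$ can be of either sign, and observing that $\sigma_k(r) > 0$ throughout the allowed range of $r$ in each curvature regime (this uses the strict monotonicity of $x\cot x$ on $(0,\pi)$ when $k>0$ and the bound $x\coth x > 1$ when $k<0$).
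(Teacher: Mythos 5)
Your proposal is correct and follows the paper's own argument exactly: part \eqref{item 1} is obtained by applying Theorem \ref{thm1} with $c_1=0$, $c_2=\kappa\sigma_k(r)$ (the paper likewise invokes Chebyshev/Markov for \eqref{Chebyshev}), and part \eqref{item 2} with $c_1=\kappa_1\sigma_k(r)$, $c_2=\kappa_2\sigma_k(r)$. The paper merely states that part \eqref{item 2} is ``similar'' and omits the algebra; your verification that $(b-1)/(a\,\sigma_k(r))$ equals $(1-t)\kappa_1+t\kappa_2$ checks out and fills in that omitted computation.
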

\begin{proof}
We just prove \eqref{item 1}, as \eqref{item 2} is similar. In this case we can set $c_{1}=0$ and $c_{2}=\sigma_{k}(r) \kappa$ in Theorem \ref{thm1}. So the constants $a$ and $b$ in \eqref{3'} satisfy $b=1$ and $0 \le a=\frac{1-e^{-\kappa \sigma_{k}(r)}}{\kappa \sigma_{k}(r)} \le 1$. From this we conclude that $\frac{1}{\omega_{n-1}} \fint_{N} A(p, r) d \mu(p) \le \left(1-\frac{1-e^{-\kappa \sigma_{k}(r)}}{n \kappa} \overline{{R}_{k}}\right) F_{k}(r)$.

The inequality \eqref{Chebyshev} is just an application of the Chebyshev's inequality.
\end{proof}

By integrating the inequality in Theorem \ref{thm1}, we can obtain an average volume comparison result:
\begin{theorem} \label{thm2}
Let $\left(N^{n}, g\right)$ be a {{complete}} Riemannian manifold.
Assume that the condition \eqref{cond1} in Theorem \ref{thm1} holds and in addition, that
\begin{enumerate}
\item[(3')]\label{cond2'}
$c_1(r)\le \int_{0}^{r} \int_{0}^{\tau} \frac{s_{k}(t)^{2}}{s_{k}(\tau)^{2}} {\mathrm{Ric}}_{k}\left(\gamma_{p, \theta}^{\prime}(t)\right) d t d \tau\le c_2(r)$ for all $(p, \theta)\in SN$.
\end{enumerate}

Suppose $r>0$ and assume $r\le\frac{\pi}{\sqrt{k}}$ if $k>0$.
Then the average volume of the metric balls of radius $r$ satisfies
$$
\overline V(r) \le \int_{0}^{r}\left(b(t)-a(t) \frac{\overline{R_{k}}}{n} \sigma_{k}(t)\right) A_{k}(t) d t
$$
where $\sigma_k$ are given in Theorem \ref{thm1}.
Here, $a(t)=\begin{cases}
\frac{e^{-c_{1}(t)}-e^{-c_{2}(t)}}{c_{2}(t)-c_{1}(t)}&\quad \textrm{if }c_1(t)<c_2(t)\\
e^{-c(t)}&\quad \textrm{if }c_1(t)=c_2(t)=c(t)
\end{cases}
$ and
$b(t)=
\begin{cases}
\frac{c_{2}(t) e^{-c_{1}(t)}-c_{1}(t) e^{-c_{2}(t)}}{c_{2}(t)-c_{1}(t)} &\quad \textrm{if }c_1(t)<c_2(t)\\
e^{-c(t)} &\quad \textrm{if }c_1(t)=c_2(t)=c(t).
\end{cases}
$

The equality holds if and only if $(N, g)$ has constant sectional curvature $k$ and $r\le \mathrm{inj}(N)$.

In particular, if \eqref{item 1} or \eqref{item 2} in Corollary \ref{cor1} holds, then $\overline V(r)\le V_k(r)$.
\end{theorem}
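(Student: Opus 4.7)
The plan is to mimic the argument of Theorem~\ref{thm1} applied at every radius $t\in(0,r]$ and then integrate once more in $t$. The main subtlety is that $r$ is now allowed to exceed the injectivity radius, so the metric ball $\mathcal{B}_p(r)$ is generally strictly larger than any geodesic ball, and $V(p,r)$ is no longer simply $\int_0^r A(p,t)\,dt$.

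First I would express $V(p,r)$ via the area formula. Letting $c(p,\theta)$ denote the distance to the cut locus in direction $\theta$, and setting $\tilde{F}(p,t,\theta):=F(p,t,\theta)$ for $t<c(p,\theta)$ and $\tilde{F}(p,t,\theta):=0$ otherwise, one has $V(p,r)=\int_{S_pN}\int_0^r \tilde{F}(p,t,\theta)\,dt\,d\theta$. For each $t\in(0,r]$ (with $t<\pi/\sqrt{k}$ when $k>0$), Proposition~\ref{prop1}, together with the trivial case $\tilde{F}=0$ past the cut locus, yields
$$\tilde{F}(p,t,\theta)\le \exp\!\left[-\int_0^t\int_0^\tau \frac{s_k(u)^2}{s_k(\tau)^2}\,\mathrm{Ric}_k\bigl(\gamma_{p,\theta}'(u)\bigr)\,du\,d\tau\right]F_k(t).$$

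Next, for each fixed $t$ I would integrate over $(p,\theta)\in SN$ and apply the reverse Jensen inequality (Lemma~\ref{lem1}) to the exponential, taking $m_1=-c_2(t)$ and $m_2=-c_1(t)$; this produces precisely the constants $a(t), b(t)$ of the theorem. The Fubini interchange is justified exactly as in Theorem~\ref{thm1}: the negative part of the integrand is dominated by a uniform multiple of $\mathrm{Ric}_k^{-}(\gamma_{p,\theta}'(u))$, whose $SN$-integral is finite by assumption~\eqref{cond1}. Liouville's theorem then gives $\fint_{SN}\mathrm{Ric}_k(\gamma_{p,\theta}'(u))\,d\mu_{SN}=\overline{R_k}/n$ independently of $u$, so the inner double integral collapses to $\sigma_k(t)\,\overline{R_k}/n$, yielding
$$\fint_{SN}\tilde{F}(p,t,\theta)\,d\mu_{SN}\le F_k(t)\Bigl(b(t)-a(t)\,\frac{\overline{R_k}}{n}\,\sigma_k(t)\Bigr).$$

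Finally I would integrate this in $t$ over $[0,r]$ and swap the order of integration via Fubini. The left-hand side collapses to $\overline{V}(r)/\omega_{n-1}$, and multiplying through by $\omega_{n-1}$ (using $A_k(t)=\omega_{n-1}F_k(t)$) delivers the asserted inequality. For the equality case, equality forces equality in Proposition~\ref{prop1} at almost every $t$ (hence constant sectional curvature $k$) and forces $\tilde{F}=F$ throughout $[0,r]\times SN$ (hence $r\le\mathrm{inj}(N)$); conversely, if $N$ has constant sectional curvature $k$ and $r\le\mathrm{inj}(N)$, then $\mathrm{Ric}_k\equiv 0$, all constants trivialize, and both sides coincide with $V_k(r)$. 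The ``in particular'' assertion follows by specializing $c_1(t), c_2(t)$ exactly as in the proof of Corollary~\ref{cor1} to verify $b(t)-a(t)\overline{R_k}\sigma_k(t)/n\le 1$ pointwise in $t$. The main obstacle throughout is the bookkeeping around the cut locus, which is what makes it essential to extend $F$ by zero so that Proposition~\ref{prop1} can be applied even when $r>\mathrm{inj}(N)$.
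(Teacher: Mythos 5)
Your proposal is correct and follows essentially the same route as the paper: extend the Jacobian by zero past a cutoff, apply Proposition \ref{prop1} pointwise, average over $SN$ using Lemma \ref{lem1} and Liouville's theorem exactly as in Theorem \ref{thm1}, then integrate in $t$. The only (harmless) difference is that you truncate at the cut point, where $V(p,r)=\int_{S_pN}\int_0^r\widetilde F\,dt\,d\theta$ holds with equality and Proposition \ref{prop1} applies as stated, whereas the paper truncates at the first conjugate point, which instead requires noting that the index-form proof of Proposition \ref{prop1} persists beyond the cut locus and yields only the inequality $|\mathcal B_p(r)|\le\int_{S_pN}\int_0^r\widetilde F\,dt\,d\theta$.
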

\begin{proof}
First of all, note that as long as $\gamma_{p, \theta}$ has no conjugate point, then the proof of Proposition \ref{prop1} holds for $r$ up to its first conjugate point.

Now, we consider the volume of the metric ball $\mathcal B_p(r)$ instead of just $B_p(r)$. Let $\widetilde{F}(p, r, \theta):=F(p, r, \theta) \chi_{[0, c(p, \theta))}(r)$. Here $c(p, \theta)=\rho$, where $\gamma_{p, \theta}(\rho)$ is the first conjugate point to $p$ along $\gamma_{p, \theta}$.
We can rewrite the inequality \eqref{prop1 ineq} as
\begin{equation}\label{prop1 ineq'}
\widetilde F(p, r, \theta)\le \exp \left[-\int_{0}^{r} \int_{0}^{\tau} \frac{s_{k}(t)^{2}}{s_{k}(\tau)^{2}} \mathrm{Ric}_{k}\left(\gamma_{p, \theta}^{\prime}(t)\right) d t d \tau\right] F_{k}(r)
\end{equation}
for all $r$ if $k\le 0$ and for $r\le \frac{\pi}{\sqrt k}$ if $k>0$. This is because for $r\ge c(p, \theta)$, the LHS is zero but the RHS is always non-negative.

Obviously, the set $ \left\{(t, \theta):\theta\in S_pN, t\le \min\{r, c(p, \theta)\}\right\}$ in geodesic polar coordinates covers $\mathcal B_p(r)$ by the exponential map $\exp_p$. Therefore
\begin{equation} \label{Bp}
|\mathcal B_p(r)|\le \int_{0}^{r}\int_{S_pN}\widetilde{F}(p, t, \theta) d\theta dt.
\end{equation}
So from \eqref{Bp} and \eqref{prop1 ineq'}, we can proceed as in the proof of Theorem \ref{thm1} to conclude that
\begin{align*}
\overline V(r)=\fint_{N}\left|\mathcal{B}_{p}(r)\right|d\mu(p)
=&\int_{0}^{r}\fint_{(p, \theta) \in S N} \widetilde F(p, t, \theta) d \mu_{S N} dt\\
\le& \int_{0}^{r}\left(b(t)-a(t) \frac{\overline{R_{k}}}{n} \sigma_{k}(t)\right) A_{k}(t)dt.
\end{align*}

The equality case is the same as Theorem \ref{thm1}, {{at least when $r\le \mathrm{inj}(M)$. On the other hand, it is obvious that beyond the injectivity radius, the equality cannot hold as \eqref{prop1 ineq'} is strict. }}
\end{proof}

Assume that $ {\mathrm{Ric}}_k\ge 0$, then by Bishop-Gromov comparison theorem, we have $\overline {A}(r)\le A_k(r)$ and $\overline {V}(r)\le V_k(r)$. We see that Corollary \ref{cor1} and Theorem \ref{thm2} gives an improvement of this estimate.
\begin{remark}
Counterexamples show that even if we are considering the average area $\overline A$ or volume $\overline V$, we cannot drop the assumption on $\mathrm{Ric}_k$ in Corollary \ref{cor1} or Theorem \ref{thm2}. Take $N=\mathbb S^2\times (\mathbb H^2/\Gamma)$ which has scalar curvature $0$, where $\Gamma<PSL(2, \mathbb R)$ is Fuchsian and $\mathbb H^2/\Gamma$ is closed. It is not hard to see that for small enough $r$, $\overline V(r)=4 \pi^{2} \int_{0}^{r}\left(1-\cos \left(\sqrt{r^{2}-t^{2}}\right)\right) \sinh (t) dt$. Either from this, or by \cite[Theorem 3.1]{gray1974volume}, we see that $\overline V(r)=\frac{\omega_3}{4} r^{4}\left(1+\frac{1}{3456} r^{4}+O\left(r^{6}\right)\right)>\frac{\omega_3}{4}r^4$.
\end{remark}

\begin{theorem}\label{thm total vol}
Let $k, \kappa>0$ and $\left(N^{n}, g\right)$ be a closed Riemannian manifold with $0 \le \mathrm{Ric}_{k} \le \kappa$, then
$$
|N| \le \left|\mathbb{S}^{n}\left(\frac{1}{\sqrt{k}}\right)\right|- \frac{\overline{R_{k}}}{n\kappa}\int_{0}^{\frac{\pi}{\sqrt{k}}}\left(1-e^{-\kappa \sigma_{k}(t)}\right) A_{k}(t) d t.
$$
The equality holds if and only if $N$ is isometric to $\mathbb S^{n}\left(\frac{1}{\sqrt{k}}\right)$.
\end{theorem}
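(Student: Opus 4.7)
The plan is to apply Theorem \ref{thm2} at the maximal radius $r = \pi/\sqrt{k}$, where the average volume of the metric balls collapses to the total volume of $N$. Since $\mathrm{Ric}_k \ge 0$ means $\mathrm{Ric} \ge (n-1)k g$, the classical Bonnet--Myers theorem gives $\mathrm{diam}(N) \le \pi/\sqrt{k}$. Consequently $\mathcal{B}_p(\pi/\sqrt{k}) = N$ for every $p \in N$, so $V(p, \pi/\sqrt{k}) = |N|$ for all $p$, and therefore $\overline{V}(\pi/\sqrt{k}) = |N|$. This reduces the theorem to an upper bound on $\overline{V}(\pi/\sqrt{k})$.

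Next, I invoke Theorem \ref{thm2} specialized to the setting of Corollary \ref{cor1}\eqref{item 1}: the hypothesis $0 \le \mathrm{Ric}_k \le \kappa$ yields the bounds $c_1(t) = 0$ and $c_2(t) = \kappa \sigma_k(t)$ for the iterated Ricci integral, which gives $b(t) = 1$ and $a(t) = \frac{1 - e^{-\kappa \sigma_k(t)}}{\kappa \sigma_k(t)}$. Plugging these into the volume inequality of Theorem \ref{thm2} evaluated at $r = \pi/\sqrt{k}$ produces
$$
|N| \le \int_0^{\pi/\sqrt{k}} A_k(t)\, dt - \frac{\overline{R_k}}{n\kappa} \int_0^{\pi/\sqrt{k}} \bigl(1 - e^{-\kappa \sigma_k(t)}\bigr) A_k(t)\, dt,
$$
and since $\int_0^{\pi/\sqrt{k}} A_k(t)\, dt = |\mathbb{S}^n(1/\sqrt{k})|$, this is exactly the desired inequality. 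Conditions \eqref{cond1} and (3') in Theorem \ref{thm2} are trivially satisfied since $N$ is closed and $\mathrm{Ric}_k$ is globally bounded.

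For the equality case, suppose equality holds. The equality clause of Theorem \ref{thm2} forces $(N, g)$ to have constant sectional curvature $k$ and $\pi/\sqrt{k} \le \mathrm{inj}(N)$. Among closed space forms of curvature $k > 0$, only the full round sphere $\mathbb{S}^n(1/\sqrt{k})$ has injectivity radius attaining $\pi/\sqrt{k}$ (any nontrivial quotient $\mathbb{S}^n(1/\sqrt{k})/\Gamma$ has strictly smaller injectivity radius), so $N$ must be isometric to $\mathbb{S}^n(1/\sqrt{k})$. The converse is immediate by direct computation, since on the sphere the Jacobian comparison in Proposition \ref{prop1} is an equality.

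The only substantive step is justifying that one may take $r = \pi/\sqrt{k}$ as an \emph{endpoint} in Theorem \ref{thm2}: its hypothesis allows $r \le \pi/\sqrt{k}$, and the Bonnet--Myers bound on the diameter guarantees both that $\overline{V}(\pi/\sqrt{k})$ captures the full volume of $N$ and that the argument of Theorem \ref{thm2} (which works up to the first conjugate point along each geodesic, and then uses the truncated Jacobian $\widetilde{F}$) poses no difficulty at this endpoint. Thus the proof is essentially a direct assembly of the results proved earlier, with Bonnet--Myers providing the bridge from $\overline{V}(\pi/\sqrt{k})$ to $|N|$.
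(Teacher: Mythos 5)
Your proof is correct and follows essentially the same route as the paper: both arguments combine Bonnet--Myers with Theorem \ref{thm2} specialized as in Corollary \ref{cor1}\eqref{item 1}. For the inequality the differences are cosmetic — the paper applies Theorem \ref{thm2} at $r=\mathrm{diam}(N)$ and then enlarges the interval of integration to $[0,\pi/\sqrt{k}]$ (using that the integrand is nonnegative, since $\overline{R_k}\le n\kappa$), whereas you apply it directly at $r=\pi/\sqrt{k}$, where $\overline{V}(\pi/\sqrt{k})=|N|$ because every metric ball of that radius is all of $N$. The rigidity arguments, however, genuinely differ: the paper notes that equality forces $\mathrm{diam}(N)=\pi/\sqrt{k}$ and invokes Cheng's maximal diameter theorem, while you use the equality clause of Theorem \ref{thm2}, which yields constant curvature $k$ together with $\mathrm{inj}(N)\ge\pi/\sqrt{k}$, and then observe that any nontrivial quotient $\mathbb{S}^n(1/\sqrt{k})/\Gamma$ has injectivity radius at most $\pi/(2\sqrt{k})$. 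Both are valid; your version stays internal to the results already established in the paper (plus an elementary fact about spherical space forms), whereas the paper's imports Cheng's theorem but does not need to track exactly where equality occurs in Theorem \ref{thm2}.
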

\begin{proof}

Notice that $\mathcal B_p(\mathrm{diam}(N))=N$ for any $p\in N$. By Bonnet-Myers theorem, $\mathrm{diam}(N) \le \frac{\pi}{\sqrt{k}}$. In particular, by Theorem \ref{thm2} we have
\begin{align*}
|N|
=|\mathcal B_p(\mathrm{diam}(N))|
\le& \int_{0}^{\mathrm{diam}(N)}\left(1-\frac{1-e^{-\kappa \sigma_k(t)}}{n \kappa} \overline{R_k}\right) A_k(t) d t\\
\le& \int_{0}^{\frac{\pi}{\sqrt{k}}}\left(1-\frac{1-e^{-\kappa \sigma_k(t)}}{n \kappa} \overline{R_k}\right) A_k(t) d t\\
=& V_k\left(\frac{\pi}{\sqrt{k}}\right) -\frac{\overline R_k}{n\kappa}\int_{0}^{\frac{\pi}{\sqrt{k}}}\left({1-e^{-\kappa \sigma_k(t)}} \right) A_k(t) d t\\
=& \left|\mathbb S^n\left(\frac{1}{\sqrt{k}}\right)\right|-\frac{\overline R_k}{n\kappa}\int_{0}^{\frac{\pi}{\sqrt{k}}}\left({1-e^{-\kappa \sigma_k(t)}} \right) A_k(t) d t.
\end{align*}
If the inequality is an equality, then the diameter is $\frac{\pi}{\sqrt{k}}$ and so $N$ is the sphere $\mathbb S^{n}\left(\frac{1}{\sqrt{k}}\right)$ by Cheng's diameter theorem.
\end{proof}

Obviously, Theorem \ref{thm total vol} can be generalized to the case where $k\le 0$, with the upper bound depending also on $\mathrm{diam}(N)$. The result should still be interesting, but the inequality is not sharp anymore. It reads as follows:
\begin{theorem}
Let $ \kappa>0$ and $\left(N^{n}, g\right)$ be a closed Riemannian manifold with $0 \le \mathrm{Ric}_{k} \le \kappa$, then
$$
|N| \le V_k\left(\mathrm{diam}(N)\right)-\frac{\overline{R_{k}}}{n \kappa} \int_{0}^{\mathrm{diam}(N)}\left(1-e^{-\kappa \sigma_{k}(t)}\right) A_{k}(t) d t.
$$
\end{theorem}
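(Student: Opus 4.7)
The plan is to reduce this directly to Theorem \ref{thm2} specialized to the Ricci bound $0\le\mathrm{Ric}_k\le\kappa$, with the only modification being that we integrate out to $r=\mathrm{diam}(N)$ rather than stopping at $\pi/\sqrt{k}$. The reason this is possible when $k\le 0$ is that the conclusion of Theorem \ref{thm2} has no $k$-dependent upper bound on $r$ in that regime.

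First I would fix an arbitrary $p\in N$ and observe the trivial-looking but crucial identity $\mathcal{B}_p(\mathrm{diam}(N))=N$. This means $V(p,\mathrm{diam}(N))=|N|$ for every $p$, so averaging over $p$ gives $\overline V(\mathrm{diam}(N))=|N|$. Since $N$ is closed the finite-volume and negative-Ricci integrability hypotheses of Theorem \ref{thm2} are automatic (in fact $\mathrm{Ric}_k^-\equiv 0$ here). Using the pointwise bound $0\le\mathrm{Ric}_k\le\kappa$ together with the definition of $\sigma_k$ given in \eqref{sigma}, I can take
\[
c_1(t)=0,\qquad c_2(t)=\kappa\,\sigma_k(t),
\]
since $\int_0^r\int_0^\tau \frac{s_k(t)^2}{s_k(\tau)^2}\mathrm{Ric}_k(\gamma')\,dt\,d\tau$ is bounded above by $\kappa\sigma_k(r)$ and below by $0$.

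Next I substitute these $c_1,c_2$ into the formulas for $a(t)$ and $b(t)$ in Theorem \ref{thm2}:
\[
a(t)=\frac{1-e^{-\kappa\sigma_k(t)}}{\kappa\,\sigma_k(t)},\qquad b(t)=1.
\]
Plugging into the conclusion of Theorem \ref{thm2} the coefficient $\sigma_k(t)$ cancels from $a(t)\sigma_k(t)$, and I obtain, for every $r>0$,
\[
\overline V(r)\le V_k(r)-\frac{\overline{R_k}}{n\kappa}\int_0^{r}\bigl(1-e^{-\kappa\sigma_k(t)}\bigr)A_k(t)\,dt.
\]
Setting $r=\mathrm{diam}(N)$ and using $\overline V(\mathrm{diam}(N))=|N|$ yields the claimed estimate.

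There is no real obstacle: the statement is a direct corollary of Theorem \ref{thm2}, and the only thing to check is that for $k\le 0$ no hypothesis of the form $r<\pi/\sqrt{k}$ is needed, which is built into Theorem \ref{thm2}. The remark in the statement that the inequality is not sharp for $k\le 0$ is consistent with this approach: equality in Theorem \ref{thm2} forces $r\le \mathrm{inj}(N)$ and the manifold to be a space form of curvature $k$, but when $k\le 0$ a noncompact space form cannot arise as a closed $N$, so equality between $\overline V(\mathrm{diam}(N))$ and the space-form comparison value is never attained; the bound is nonetheless valid.
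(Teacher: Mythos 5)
Your proposal is correct and matches the paper's intended argument: the paper derives this as an immediate variant of Theorem \ref{thm total vol}, using $\mathcal{B}_p(\mathrm{diam}(N))=N$ and Theorem \ref{thm2} with $c_1(t)=0$, $c_2(t)=\kappa\sigma_k(t)$ (hence $a(t)=\frac{1-e^{-\kappa\sigma_k(t)}}{\kappa\sigma_k(t)}$, $b(t)=1$), simply stopping the integral at $r=\mathrm{diam}(N)$ instead of extending to $\pi/\sqrt{k}$. No gaps; your observation that the closedness of $N$ makes the integrability hypotheses automatic is exactly what is needed.
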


\subsection{Effect on average mean curvature}\label{mean curvature}

There is another interpretation of Theorem \ref{thm monotone}. On one hand, we can regard it as a geometric quantity which is monotone decreasing in $r$ if $\overline {R_k}\ge0$. On the other hand, even if we discard the assumption that $\overline {R_k}\ge0$, we can still regard it as a comparison of the average mean curvature over $SN$ to the mean curvature of the geodesic sphere in the space form of curvature $k$. Indeed, as $H(p, r, \theta)=\frac{\partial }{\partial r}
\log F(p, r, \theta)$, by differentiating into the integral in \eqref{ineq monotone}, we can see that Theorem \ref{thm monotone} is equivalent to the following result.
\begin{theorem}\label{thm H}
Let $(N^n, g)$ be a closed Riemannian manifold. Suppose $r>0$ is smaller than the injectivity radius of $N$, with $r<\frac{\pi}{\sqrt{k}}$ if $k>0$, then the average over $SN$ of the mean curvature of the geodesic spheres of radius $r$ in $N$ satisfies
\begin{equation*}
\begin{aligned}
&\fint_{S N} H(p, r, \theta) d \mu_{S N}(p, \theta) \le
\frac{F_{k}^{\prime}(r)}{F_{k}(r)}-\frac{\overline { {R}_{k}}}{n }\phi_k(r)
\end{aligned}
\end{equation*}
where $\phi_k$ is a non-negative function given by \eqref{phi}.
The equality holds if and only if $(N, g)$ has constant sectional curvature $k$.
\end{theorem}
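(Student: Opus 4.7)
The plan is to derive Theorem \ref{thm H} from Theorem \ref{thm monotone} by differentiating the $SN$-average on the left-hand side of \eqref{ineq monotone} under the integral, as suggested in the paragraph preceding the statement. Since $N$ is closed and $r$ stays below both $\mathrm{inj}(N)$ and (when $k>0$) $\pi/\sqrt{k}$, the function $(p,\theta,r)\mapsto F(p,r,\theta)$ is smooth and positive on a compact neighborhood of $SN\times\{r\}$, and so is $F_k(r)$. Standard differentiation under the integral therefore gives
\[
\frac{d}{dr}\fint_{SN}\log\frac{F(p,r,\theta)}{F_k(r)}\,d\mu_{SN}
= \fint_{SN}\partial_r\log F(p,r,\theta)\,d\mu_{SN}-\frac{F_k'(r)}{F_k(r)}.
\]

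Next I would plug in the standard identity $H(p,r,\theta)=\partial_r\log F(p,r,\theta)$, recorded in the proof of Proposition \ref{prop2}. This converts the right-hand side of the previous display into $\fint_{SN}H(p,r,\theta)\,d\mu_{SN}-F_k'(r)/F_k(r)$, which, substituted into \eqref{ineq monotone}, yields the claimed inequality after trivial rearrangement.

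For the equality case, the computation just performed shows that equality in Theorem \ref{thm H} at some $r$ is equivalent to equality in \eqref{ineq monotone} at the same $r$; the equality clause of Theorem \ref{thm monotone} then forces $(N,g)$ to have constant sectional curvature $k$, and the converse is a direct verification on a space form. I anticipate no real obstacle: the argument is essentially bookkeeping once Theorem \ref{thm monotone} and the Jacobian-mean curvature identity are in hand. An equally short alternative, bypassing Theorem \ref{thm monotone}, would be to average the pointwise estimate in Proposition \ref{prop2} over $SN$ directly and use Liouville's theorem together with the coarea formula exactly as in \eqref{average Ric}, recovering the same inequality with the explicit integral $\int_0^r s_k(t)^2/s_k(r)^2\,dt=\phi_k(r)$.
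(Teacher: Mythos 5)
Your proposal matches the paper's own derivation: Theorem \ref{thm H} is obtained exactly by differentiating under the integral in \eqref{ineq monotone} and using the identity $H(p,r,\theta)=\partial_r\log F(p,r,\theta)$, with the equality case inherited from Theorem \ref{thm monotone}. Your alternative route (averaging Proposition \ref{prop2} directly over $SN$ via Liouville's theorem) is also exactly how the paper proves Theorem \ref{thm monotone} itself, so both variants you describe coincide with the paper's argument.
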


We remark that the fiber integral $\fint_{S_pN}H(p, r, \theta)d\theta$ is not the same as the average of the mean curvature over the geodesic sphere $S_p(r)$, as the measure on the geodesic sphere may not be a scaling of the spherical measure.

Combining this result with the Gauss-Bonnet theorem, we have the following corollary.
\begin{corollary}
Let $N^2$ be either a $2$-torus or $2$-sphere.
For any metric $g$ on $N$ and for any $r$ less than the injectivity radius of $(N, g)$, there exists $p\in N$ so that the average value over $\mathbb S^1$ of the mean curvature of $S_p(r)$ satisfies
$\fint_{\mathbb S^1}H(p, r, \theta)d\theta\le\frac{1}{r}$.
\end{corollary}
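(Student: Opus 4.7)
The plan is to apply Theorem \ref{thm H} with $k=0$ and $n=2$, then combine it with the Gauss-Bonnet theorem to control the average scalar curvature, and finally use a Fubini / averaging argument to extract a single point $p$.

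First I would specialize Theorem \ref{thm H}. With $k=0$, we have $F_k(r)=r^{n-1}$, so $F_k'(r)/F_k(r)=(n-1)/r$, and $\phi_0(r)=r/3$. Taking $n=2$ gives
\begin{equation*}
\fint_{SN} H(p, r, \theta)\, d\mu_{SN}(p, \theta) \le \frac{1}{r}-\frac{\overline{R}}{6}\, r
\end{equation*}
for every $r$ less than the injectivity radius of $(N, g)$.

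Next I would show $\overline R \ge 0$ via Gauss-Bonnet. In dimension two, $R=2K$, where $K$ is the Gaussian curvature, and hence $\int_N R\, d\mu_N = 2\int_N K\, d\mu_N = 4\pi\chi(N)$. Since $\chi(N)=0$ for the $2$-torus and $\chi(N)=2$ for the $2$-sphere, in both cases $\int_N R\, d\mu_N \ge 0$, so $\overline R \ge 0$. Combined with the previous display, this yields $\fint_{SN} H(p, r, \theta)\, d\mu_{SN} \le \frac{1}{r}$.

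Finally, the canonical measure on $SN$ disintegrates as $d\mu_{SN} = d\theta\, d\mu_N$ with $d\theta$ the standard measure on the fiber $S_pN\cong \mathbb S^{1}$ (of total mass $\omega_{1}=2\pi$), so by Fubini
\begin{equation*}
\fint_{SN} H(p, r, \theta)\, d\mu_{SN}(p, \theta) = \fint_{N}\left(\fint_{\mathbb S^{1}} H(p, r, \theta)\, d\theta\right) d\mu_N(p).
\end{equation*}
Since the average of the function $p\mapsto \fint_{\mathbb S^{1}} H(p, r, \theta)\, d\theta$ over $N$ is $\le 1/r$, there must exist at least one $p\in N$ at which the fiber average satisfies $\fint_{\mathbb S^{1}} H(p, r, \theta)\, d\theta \le 1/r$, as claimed. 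There is no real obstacle in this argument; the only point that deserves care is correctly identifying $\phi_0(r)=r/3$ and the sign of $\overline R$ via Gauss-Bonnet.
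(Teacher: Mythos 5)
Your proposal is correct and follows exactly the route the paper intends: specializing Theorem \ref{thm H} to $k=0$, $n=2$, using Gauss--Bonnet to get $\overline R\ge 0$ from $\chi(N)\ge 0$, and then extracting a point $p$ from the fact that the $SN$-average equals the $N$-average of the fiber averages. All normalizations and the value $\phi_0(r)=r/3$ check out, so there is nothing to add.
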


Gray and Vanhecke \cite[Corollary 12.7]{gray1979Riemannian} proved that on any Riemannian manifold $(N, g)$, if
$$\fint_{\mathbb S^{n-1}}H(p, r, \theta)d\theta= \frac{n-1}{r} $$
for all $p \in N$ and all sufficiently small $r$, then $N$ is flat.
As an application of Theorem \ref{thm H}, we can easily prove the following averaging version of this kind of result.
\begin{corollary}
Let $\left(N^{n}, g\right)$ be a closed Riemannian manifold whose average scalar curvature is at least $n(n-1)k$. Suppose the average over $S N$ of the mean curvature of the geodesic spheres of radius $r$ in $N$ satisfies
$$
\fint_{S N} H(p, r, \theta) d \mu_{S N}(p, \theta) = \frac{F_{k}^{\prime}(r)}{F_{k}(r)}
$$
for some $r>0$ which is smaller than the injectivity radius of $N$, with $r<\frac{\pi}{\sqrt{k}}$ if $k>0$.
Then $(N, g)$ has constant sectional curvature $k$.
\end{corollary}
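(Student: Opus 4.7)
The plan is to deduce this corollary directly from Theorem \ref{thm H} together with the equality hypothesis, by a simple sign-chasing argument. First I would translate the hypothesis on the average scalar curvature into the statement $\overline{R_k} \ge 0$, using the definition $\overline{R_k} = \overline{R} - n(n-1)k$. This is the form in which the bound feeds into Theorem \ref{thm H}.

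Next I would apply Theorem \ref{thm H} at the radius $r$ in the corollary. Since $r$ lies below the injectivity radius of $N$ (and below $\pi/\sqrt{k}$ when $k>0$), Theorem \ref{thm H} is applicable and gives
\begin{equation*}
\fint_{SN} H(p,r,\theta)\, d\mu_{SN}(p,\theta) \;\le\; \frac{F_k'(r)}{F_k(r)} - \frac{\overline{R_k}}{n}\,\phi_k(r).
\end{equation*}
The hypothesis on the mean curvature average says the left hand side equals $F_k'(r)/F_k(r)$, so rearranging gives $\frac{\overline{R_k}}{n}\phi_k(r) \le 0$.

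The next step is to observe that $\phi_k(r) > 0$ for every $r > 0$ in the allowed range: this follows from the explicit expressions in \eqref{phi}, since $\phi_k$ is non-negative by Theorem \ref{thm monotone} and vanishes only at $r=0$ (for $k=0$ this is obvious; for $k \ne 0$ one reads off positivity on $(0,\pi/\sqrt{k})$ or $(0,\infty)$ respectively from the formula, or notes that $\phi_k(r) = \int_0^r s_k(t)^2/s_k(r)^2\,dt$ from the proof of Theorem \ref{thm monotone}, which is strictly positive). Combining $\overline{R_k}\phi_k(r)\le 0$ with the hypothesis $\overline{R_k}\ge 0$ then forces $\overline{R_k} = 0$.

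With $\overline{R_k} = 0$, the inequality in Theorem \ref{thm H} becomes $\fint_{SN} H\,d\mu_{SN} \le F_k'(r)/F_k(r)$, and the hypothesis says this is actually an equality. The equality case of Theorem \ref{thm H} then immediately yields that $(N,g)$ has constant sectional curvature $k$. There is no substantive obstacle here: the proof is just a two-line reduction to Theorem \ref{thm H}, and the only thing to be careful about is ensuring $\phi_k(r)$ is strictly positive so that one may divide by it to conclude $\overline{R_k}=0$ before invoking the rigidity statement.
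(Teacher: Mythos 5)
Your proposal is correct and is exactly the argument the paper intends (the paper gives no written proof, only the remark that the corollary follows as an application of Theorem \ref{thm H}): the hypothesis $\overline{R}\ge n(n-1)k$ gives $\overline{R_k}\ge 0$, the strict positivity of $\phi_k(r)=\int_0^r s_k(t)^2/s_k(r)^2\,dt$ on the allowed range forces $\overline{R_k}=0$, and the rigidity then follows from the equality case of Theorem \ref{thm H}.
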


\end{document}